\documentclass[oneside,english]{amsart}
\usepackage[T1]{fontenc}
\usepackage[latin9]{inputenc}
\usepackage{geometry}
\geometry{verbose,tmargin=3cm,bmargin=3cm,lmargin=2.5cm,rmargin=2.5cm}
\usepackage{mathtools}
\usepackage{amstext}
\usepackage{amsthm}
\usepackage{amssymb}

\makeatletter
\numberwithin{equation}{section}
\numberwithin{figure}{section}
\theoremstyle{plain}
\newtheorem{thm}{\protect\theoremname}
\theoremstyle{remark}
\newtheorem{rem}[thm]{\protect\remarkname}
\theoremstyle{definition}
\newtheorem{example}[thm]{\protect\examplename}
\theoremstyle{plain}
\newtheorem{lem}[thm]{\protect\lemmaname}
\theoremstyle{plain}
\newtheorem{prop}[thm]{\protect\propositionname}
\theoremstyle{plain}
\newtheorem{cor}[thm]{\protect\corollaryname}

\usepackage{shuffle}
\usepackage[all]{xy}

\makeatother

\usepackage{babel}
\providecommand{\corollaryname}{Corollary}
\providecommand{\examplename}{Example}
\providecommand{\lemmaname}{Lemma}
\providecommand{\propositionname}{Proposition}
\providecommand{\remarkname}{Remark}
\providecommand{\theoremname}{Theorem}

\begin{document}
\address[Minoru Hirose]{Institute for Advanced Research, Nagoya University,  Furo-cho, Chikusa-ku, Nagoya, 464-8602, Japan}
\email{minoru.hirose@math.nagoya-u.ac.jp}
\address[Hideki Murahara]{The University of Kitakyushu,  4-2-1 Kitagata, Kokuraminami-ku, Kitakyushu, Fukuoka, 802-8577, Japan}
\email{hmurahara@mathformula.page}
\address[Shingo Saito]{Faculty of Arts and Science, Kyushu University,  744, Motooka, Nishi-ku, Fukuoka, 819-0395, Japan}
\email{ssaito@artsci.kyushu-u.ac.jp}
\subjclass[2010]{Primary 11M32}

\global\long\def\mstuffle{\mathbin{{\tilde{*}}}}%

\title{Ohno relation for regularized multiple zeta values}
\author{Minoru Hirose}
\author{Hideki Murahara}
\author{Shingo Saito}
\begin{abstract}
The Ohno relation for multiple zeta values can be formulated as saying
that a certain operator, defined for indices, is invariant under taking
duals. In this paper, we generalize the Ohno relation to regularized
multiple zeta values by showing that, although the suitably generalized
operator is not invariant under taking duals, the relation between
its values at an index and at its dual index can be written explicitly
in terms of the gamma function.
\end{abstract}

\keywords{Multiple zeta values, Ohno relation, Regularized multiple zeta values}
\maketitle

\section{Introduction}

Multiple zeta values (MZVs) are real numbers defined by
\[
\zeta(k_{1},\dots,k_{d})\coloneqq\sum_{0<m_{1}<\cdots<m_{d}}\frac{1}{m_{1}^{k_{1}}\cdots m_{d}^{k_{d}}},
\]
where $k_{1},\dots,k_{d}$ are positive integers with $k_{d}>1$.
A lot of formulas for MZVs are known and the Ohno relation is one
of the most famous among such formulas. Put $\mathfrak{h}\coloneqq\mathbb{Q}\left\langle x,y\right\rangle $
and $\mathfrak{h}^{0}\coloneqq\mathbb{Q}\oplus y\mathfrak{h}x$. Define
a linear map $Z:\mathfrak{h}^{0}\to\mathbb{R}$, a ring homomorphism
$\sigma:\mathfrak{h}\to\mathfrak{h}[[T]]$, and an anti-automorphism
$\tau:\mathfrak{h}\to\mathfrak{h}$ by
\[
Z(yx^{k_{1}-1}\cdots yx^{k_{d}-1})=\zeta(k_{1},\dots,k_{d}),
\]
\[
\sigma(x)=x,\sigma(y)=y\frac{1}{1-xT},
\]
and
\[
\tau(x)=y,\tau(y)=x.
\]
In this paper, we use $T,A,B$ as parameters of formal power series,
and we regard any linear map $g$ on a $\mathbb{Q}$-vector space
$R$ as naturally extended to the linear map $g$ on $R\otimes\mathbb{Q}[[T,A,B]]$
by $\sum_{t,a,b\geq0}g(c_{t,a,b}T^{t}A^{a}B^{b})=\sum_{t,a,b\geq0}g(c_{t,a,b})T^{t}A^{a}B^{b}$.
Then $Z\circ\sigma(w)$ is defined for any $w\in\mathfrak{h}^{0}$
as an element in $\mathbb{R}_{T}\coloneqq\mathbb{R}[[T]]$.
\begin{thm}[Ohno relation for MZVs, \cite{Ohno}]
\label{thm:OhnoRel}For $w\in\mathfrak{h}^{0}$, we have
\[
Z\circ\sigma\circ\tau(w)=Z\circ\sigma(w).
\]
\end{thm}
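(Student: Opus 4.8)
The plan is to realize $Z\circ\sigma$ as a deformation of the iterated‑integral representation of multiple zeta values, to apply the substitution that implements duality at the level of that representation, and then to carry out the resulting integral computation.

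Writing $e_{0}=x$ and $e_{1}=y$, recall that for $w=e_{a_{1}}\cdots e_{a_{k}}\in y\mathfrak{h}x$ one has $Z(w)=\int_{\Delta_{k}}\omega_{a_{1}}(t_{1})\cdots\omega_{a_{k}}(t_{k})$, where $\Delta_{k}=\{0<t_{1}<\cdots<t_{k}<1\}$, $\omega_{0}(t)=dt/t$ and $\omega_{1}(t)=dt/(1-t)$. Since $\sigma(y)=y\sum_{e\geq0}(xT)^{e}$ and $\sigma(x)=x$, applying $\sigma$ to $w$ inserts, after each occurrence of $y$, an arbitrary number of further $x$'s weighted by a power of $T$; a maximal string of $x$'s of length $n$ lying between two consecutive $y$-variables $s<s'$ (with $s'=1$ for the last string) contributes $\int_{s<u_{1}<\cdots<u_{n+e}<s'}\frac{du_{1}}{u_{1}}\cdots\frac{du_{n+e}}{u_{n+e}}=(\log(s'/s))^{n+e}/(n+e)!$, and the resummation
\[
\sum_{e\geq0}T^{e}\,\frac{L^{n+e}}{(n+e)!}=\int_{0}^{L}e^{T(L-v)}\,\frac{v^{n-1}}{(n-1)!}\,dv\quad(n\geq1),\qquad\sum_{e\geq0}T^{e}\,\frac{L^{e}}{e!}=e^{TL},
\]
followed by substituting back to the original variables, gives
\[
Z\bigl(\sigma(w)\bigr)=\int_{\Delta_{k}}\omega_{a_{1}}(t_{1})\cdots\omega_{a_{k}}(t_{k})\cdot\prod_{p}\Bigl(\frac{t_{p}}{t_{p-1}}\Bigr)^{\!T},
\]
the product being over the positions $p$ of a $y$ in $w$ other than the first, together with the virtual position $k+1$ (with $t_{k+1}:=1$); this is a well-defined element of $\mathbb{R}[[T]]$ since each $(t_{p}/t_{p-1})^{T}$ is bounded on $\Delta_{k}$.

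Now perform the change of variables $t_{i}=1-s_{k+1-i}$, an involution of $\Delta_{k}$ which interchanges $\omega_{0}\leftrightarrow\omega_{1}$ and reverses the order of the forms, hence sends $\omega_{a_{1}}(t_{1})\cdots\omega_{a_{k}}(t_{k})$ to the form-word of $\tau(w)$ and sends $\prod_{p}(t_{p}/t_{p-1})^{T}$ to $\prod_{p'}\bigl((1-s_{p'})/(1-s_{p'+1})\bigr)^{T}$, where $p'$ runs over $0$ together with the positions of an $x$ in $\tau(w)$ other than the last (with $s_{0}:=0$). Since the change of variables does not alter the value of the integral, and since the displayed formula applied to $\tau(w)$ writes $Z(\sigma(\tau(w)))$ as $\int_{\Delta_{k}}(\text{forms of }\tau(w))\cdot\prod_{q}(s_{q}/s_{q-1})^{T}$ with $q$ ranging over the non-first $y$-positions of $\tau(w)$ together with $k+1$, the theorem becomes the assertion that these two iterated integrals over $\Delta_{k}$ — the same forms, but the two different deformation factors — are equal. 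I would prove this by integrating out the variables of $\tau(w)$ one at a time: after each such integration the remaining integrand picks up a factor $(1-s)^{-T}$ or $s^{-T}$, so the one-variable integrations are beta integrals $\int_{0}^{1}u^{a}(1-u)^{b-1}\,du=\Gamma(a+1)\Gamma(b)/\Gamma(a+b+1)$, and the gamma functions they produce telescope and cancel so that both sides collapse to the same explicit power series (for instance, when $\tau(w)=yx^{k-1}$ both sides equal $\sum_{m\geq1}m^{1-k}(m-T)^{-1}=\sum_{n\geq0}\zeta(k+n)T^{n}$).

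The main obstacle is this last computation: the two deformation factors are genuinely different functions on $\Delta_{k}$, so their equality against the forms of $\tau(w)$ is not formal but rests on the beta/gamma bookkeeping above, together with keeping the boundary conventions ($t_{0}$, $t_{k+1}$, and the empty $x$-blocks arising from entries $k_{i}=1$) straight; organizing this cleanly is essentially the content of Ohno's relation. An alternative that packages the same bookkeeping more efficiently is the connected-sum method: interpolate between $Z\circ\sigma$ applied to $w$ and to $\tau(w)$ by a double series carrying a connector $\Gamma(m+1)\Gamma(n+1-T)/\Gamma(m+n+1-T)$, and deduce the identity from two transport relations proved by the same kind of beta-integral telescoping — the gamma function in that connector being the prototype of the gamma-function factors appearing in the paper's main theorem.
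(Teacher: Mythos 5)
The paper does not prove this theorem: it is quoted from Ohno's original article and used as a known input, so there is no in-paper argument to compare yours against. Judged on its own terms, your proposal correctly sets up a standard and genuinely useful reformulation: the resummation of $Z\circ\sigma$ into a $T$-deformed iterated integral with factors $(t_{p}/t_{p-1})^{T}$ is right (one can check it reproduces $\sum_{0<m_{1}<\cdots<m_{d}}\prod_{i}m_{i}^{1-k_{i}}(m_{i}-T)^{-1}$), and the involution $t_{i}=1-s_{k+1-i}$ does transform it into the integral of the forms of $\tau(w)$ against the \emph{other} deformation factor $\prod_{p'}\bigl((1-s_{p'})/(1-s_{p'+1})\bigr)^{T}$. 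But at that point nothing has been proved: the theorem has merely been restated as the equality of two different deformations of the same iterated integral, and you say as much (``organizing this cleanly is essentially the content of Ohno's relation'').

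The gap is in the claimed mechanism for closing that equality. Integrating out the variables ``one at a time'' over the simplex does not produce beta integrals except at the two extreme positions: every interior integration has limits given by neighbouring variables, so it yields incomplete beta functions, and already for $\tau(w)=yx^{2}$ the procedure lands on a $_{3}F_{2}$-type sum rather than a telescoping product of gamma factors. Likewise, your sample evaluation for $\tau(w)=yx^{k-1}$ asserts that the second integral equals $\sum_{m}m^{1-k}(m-T)^{-1}$, but that assertion is exactly the generating function of the sum formula and cannot be taken for granted. The connected-sum route you mention (a connector of the shape $\Gamma(m+1)\Gamma(n+1-T)/\Gamma(m+n+1-T)$ with two transport relations) is a genuine proof of Ohno's relation, but it is only named, not executed. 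Note that the present paper's own machinery illustrates how much nontrivial input is needed at precisely this point: its generalization of the statement is driven by Kawashima's relation, the regularization theorem, and a Gauss $_{2}F_{1}$ evaluation (Propositions \ref{prop:H_prod} and \ref{prop:H_special}), not by elementary telescoping.
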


\begin{rem}
The original formulation of the Ohno relation in \cite{Ohno} is described
as 
\[
\sum_{\substack{e_{1}+\cdots+e_{r}=m\\
e_{1},\dots,e_{r}\geq0
}
}\zeta(k_{1}+e_{1},\dots,k_{r}+e_{r})=\sum_{\substack{e_{1}+\cdots+e_{s}=m\\
e_{1},\dots,e_{s}\geq0
}
}\zeta(k_{1}'+e_{1},\dots,k_{s}'+e_{s}),
\]
where $(k_{1}',\dots,k_{s}')$ is the dual index of $(k_{1},\dots,k_{r})$
i.e., $\tau(yx^{k_{1}-1}\cdots yx^{k_{r}-1})=yx^{k_{1}'-1}\cdots yx^{k_{s}'-1}$,
and it corresponds to the equality of the coefficient of $T^{m}$
in Theorem \ref{thm:OhnoRel} in the case $w=yx^{k_{1}-1}\cdots yx^{k_{d}-1}$.
A formulation similar to the above theorem can be found in \cite{IKZ}.
\end{rem}

The shuffle product $\shuffle:\mathfrak{h}\times\mathfrak{h}\to\mathfrak{h}$
is the bilinear map defined by 
\[
w\shuffle1=1\shuffle w=w\ \ \ (w\in\mathfrak{h}),
\]
\[
u_{1}w_{1}\shuffle u_{2}w_{2}=u_{1}(w_{1}\shuffle u_{2}w_{2})+u_{2}(u_{1}w_{1}\shuffle w_{2})\ \ (u_{i}\in\{x,y\},w_{i}\in\mathfrak{h}).
\]
It is known that $Z(w_{1}\shuffle w_{2})=Z(w_{1})Z(w_{2})$ for $w_{1},w_{2}\in\mathfrak{h}^{0}$.
Let $Z_{X,Y}^{\shuffle}:\mathfrak{h}\to\mathbb{R}[X,Y]$ be the unique
map characterized by $\left.Z_{X,Y}^{\shuffle}\right|_{\mathfrak{h}^{0}}=Z$,
$Z_{X,Y}^{\shuffle}(w_{1}\shuffle w_{2})=Z_{X,Y}^{\shuffle}(w_{1})Z_{X,Y}^{\shuffle}(w_{2})$
for $w_{1},w_{2}\in\mathfrak{h}$, $Z_{X,Y}^{\shuffle}(x)=X$, and
$Z_{X,Y}^{\shuffle}(y)=Y$. We put $Z^{\shuffle}=Z_{0,0}^{\shuffle}$.
Then for any $w\in\mathfrak{h}$, $Z_{X,Y}^{\shuffle}(w)$ becomes
a polynomial of $X$ and $Y$ whose coefficients are linear combinations
of MZVs, and called a (shuffle) regularized MZV. The term `shuffle
regularized MZVs' often means the values $\zeta^{\shuffle}(k_{1},\dots,k_{d})=Z_{0,0}^{\shuffle}(yx^{k_{1}-1}\cdots yx^{k_{d}-1})$
with $k_{1},\dots,k_{d}\geq1$, namely the values for words starting
with $y$, but in this paper, we also consider the values for words
starting with $x$. The main theorem of this paper is a generalization
of Theorem \ref{thm:OhnoRel} to regularized MZVs. To state the main
theorem, define an $\mathbb{R}$-linear map $\rho:\mathbb{R}[X,Y]\to\mathbb{R}_{T}[X,Y]$
by the equality
\[
\rho(e^{AX+BY})=\frac{\Gamma(1+A)\Gamma(1-T+B)}{\Gamma(1+B)\Gamma(1-T+A)}e^{AX+BY}
\]
 in $\mathbb{R}_{T}[X,Y][[A,B]]$. Equivalently, $\rho$ is determined
by
\[
\rho\left(\frac{X^{k}Y^{l}}{k!l!}\right)=\sum_{k'=0}^{k}\sum_{l'=0}^{l}c(k-k',l-l')\frac{X^{k'}Y^{l'}}{k'!l'!},
\]
where the coefficients $c(-,-)\in\mathbb{R}_{T}$ are given by
\[
\sum_{k=0}^{\infty}\sum_{l=0}^{\infty}c(k,l)A^{k}B^{l}=\frac{\Gamma(1+A)\Gamma(1-T+B)}{\Gamma(1+B)\Gamma(1-T+A)}.
\]

\begin{thm}[Main theorem; Ohno relation for regularized MZVs]
\label{thm:MainThm}For $w\in\mathfrak{h}$, we have
\[
Z_{Y,X}^{\shuffle}\circ\sigma\circ\tau(w)=\rho\circ Z_{X,Y}^{\shuffle}\circ\sigma(w).
\]
\end{thm}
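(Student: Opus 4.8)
The plan is to use the duality theorem to reduce to an identity not involving $\tau$, and then to prove that identity by comparing two regularized iterated integrals whose discrepancy is the Gamma factor.

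First, $\tau$ is a shuffle automorphism of $\mathfrak{h}$ preserving $\mathfrak{h}^{0}$ — it is the composite of word reversal and the swap $x\leftrightarrow y$, each of which is a shuffle automorphism — and $Z(\tau(w))=Z(w)$ for $w\in\mathfrak{h}^{0}$ by the duality theorem (the $T=0$ case of Theorem~\ref{thm:OhnoRel}). Hence $Z_{Y,X}^{\shuffle}\circ\tau$ is a shuffle homomorphism restricting to $Z$ on $\mathfrak{h}^{0}$ with $x\mapsto X$, $y\mapsto Y$, so by the uniqueness characterizing $Z_{X,Y}^{\shuffle}$ we get $Z_{Y,X}^{\shuffle}\circ\tau=Z_{X,Y}^{\shuffle}$. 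Putting $\sigma^{\vee}:=\tau\circ\sigma\circ\tau$, the ring homomorphism with $\sigma^{\vee}(x)=\frac{1}{1-yT}x$ and $\sigma^{\vee}(y)=y$, and using $\sigma\circ\tau=\tau\circ\sigma^{\vee}$, the main theorem becomes equivalent to $Z_{X,Y}^{\shuffle}\circ\sigma^{\vee}=\rho\circ Z_{X,Y}^{\shuffle}\circ\sigma$ on $\mathfrak{h}$. On $\mathfrak{h}^{0}$ this is nothing but the classical Ohno relation: $\sigma$ and $\sigma^{\vee}$ preserve $\mathfrak{h}^{0}$, $Z_{X,Y}^{\shuffle}$ agrees with $Z$ there, $Z\circ\sigma^{\vee}=Z\circ\sigma$ on $\mathfrak{h}^{0}$ is Theorem~\ref{thm:OhnoRel} read through duality, and $\rho$ restricts to the identity on $\mathbb{R}_{T}\subseteq\mathbb{R}_{T}[X,Y]$ since the defining Gamma quotient is $1$ at $A=B=0$. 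So all the new content lies in the behaviour on the "boundary letters" $x,y$, consistently with the fact that $(\mathfrak{h},\shuffle)$ is the polynomial algebra $\mathfrak{h}^{0}[x,y]$ over $(\mathfrak{h}^{0},\shuffle)$.

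Next I would express both sides as shuffle-regularized iterated integrals over $[0,1]$ with forms $\omega_{x}=dt/t$, $\omega_{y}=dt/(1-t)$, the divergences at $0$ and $1$ being regularized into $X$ and $Y$ respectively. For $w=u_{1}\cdots u_{k}$, expanding the geometric series $(1-xT)^{-1}$ shows that $Z_{X,Y}^{\shuffle}(\sigma(w))$ is the regularized iterated integral of $w$ in which each $y$-form, sitting at a variable $t$, acquires the weight $(t'/t)^{T}$ with $t'$ the next integration variable ($t'=1$ for the last letter); telescoping, this is a global weight $\prod_{j}t_{j}^{c_{j}T}$ with $c_{j}=[u_{j-1}=y]-[u_{j}=y]$. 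Similarly $Z_{X,Y}^{\shuffle}(\sigma^{\vee}(w))$ is the same integral with each $x$-form at $t$ weighted by $\bigl((1-t'')/(1-t)\bigr)^{T}$ ($t''$ the previous variable, $t''=0$ for the first letter), a global weight $\prod_{j}(1-t_{j})^{c_{j+1}T}$. Thus the claim is a comparison, for one fixed word, of a $t^{\pm T}$-weighted regularized integral with a $(1-t)^{\pm T}$-weighted one; for $w\in\mathfrak{h}^{0}$ there is no regularization and the two integrals are literally equal, which is exactly the classical Ohno relation, so the Gamma factor enters only through the boundary regularization of non-admissible words.

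It then remains to show that passing between the two weightings multiplies the regularized value by $\rho$, which is the (pseudo-)differential operator $G(\partial_{X},\partial_{Y})$ with $G(A,B)=\frac{\Gamma(1+A)\Gamma(1-T+B)}{\Gamma(1+B)\Gamma(1-T+A)}$. The mechanism is already visible for $w=x$: the $\sigma$-integral is $\int_{0}^{1}dt/t$ while the $\sigma^{\vee}$-integral is $\int_{0}^{1}(1-t)^{-T}\,dt/t$, and although $(1-t)^{-T}$ is analytically trivial at $t=0$, after subtracting the bare $1/t$ divergence one is left with the finite correction $\int_{0}^{1}\frac{(1-t)^{-T}-1}{t}\,dt=-\gamma-\psi(1-T)=\sum_{n\ge1}\zeta(n+1)T^{n}=\rho(X)-X$. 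In general one transports the $T$-weights to the two endpoints one form at a time, at each step applying the Beta integral $\int_{0}^{1}t^{a-1}(1-t)^{b-1}\,dt=\Gamma(a)\Gamma(b)/\Gamma(a+b)$, so that the $x$-forms accumulating at $0$ contribute the operator $\Gamma(1+\partial_{X})/\Gamma(1-T+\partial_{X})$ and the $y$-forms at $1$ contribute $\Gamma(1-T+\partial_{Y})/\Gamma(1+\partial_{Y})$, with composite $\rho$. The main obstacle is precisely this last step: the $T$-weights are spread along the whole path rather than localized at the endpoints, so one must commute them past all the intermediate forms while keeping track of the shuffle regularization, and check that the resulting iterated Beta integrals collapse into the single Gamma quotient. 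A more structural way to organize the same computation is to regard $Z_{X,Y}^{\shuffle}\circ\sigma$ and $Z_{X,Y}^{\shuffle}\circ\sigma^{\vee}$ as two regularizations — at the two tangential base points of $\mathbb{P}^{1}\setminus\{0,1,\infty\}$ — of one $T$-deformed generating series, the two being related by the familiar Gamma factor comparing such regularizations.
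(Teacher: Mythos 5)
Your opening reduction is sound and matches the paper's Section~2: using the duality theorem one gets $Z_{Y,X}^{\shuffle}\circ\tau=Z_{X,Y}^{\shuffle}$, so the claim becomes $Z_{X,Y}^{\shuffle}\circ\bar{\sigma}=\rho\circ Z_{X,Y}^{\shuffle}\circ\sigma$ with $\bar{\sigma}=\tau\circ\sigma\circ\tau$, and your description of the two sides as iterated integrals weighted by $\prod_j t_j^{c_jT}$ versus $\prod_j(1-t_j)^{c_{j+1}T}$ is correct, as is the $w=x$ sanity check. But the proof stops exactly where the theorem begins. The assertion that ``the $x$-forms accumulating at $0$ contribute $\Gamma(1+\partial_X)/\Gamma(1-T+\partial_X)$ and the $y$-forms at $1$ contribute $\Gamma(1-T+\partial_Y)/\Gamma(1+\partial_Y)$'' is precisely the content to be proved, and you flag it yourself as ``the main obstacle'' without resolving it: the $T$-weights sit on every letter of the word, and no mechanism is given for commuting them past the intermediate forms compatibly with shuffle regularization, nor for why the resulting iterated Beta integrals would collapse to a single Gamma quotient independent of $w$. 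A related soft spot: the remark that $(\mathfrak{h},\shuffle)=\mathfrak{h}^{0}[x,y]$ so ``all the new content lies in the boundary letters'' does not by itself localize the problem, because $\sigma$ and $\bar{\sigma}$ are concatenation (not shuffle) homomorphisms and $\rho$ is not a ring map; the paper needs the $D_{a,b}$ generating-series computation of Section~2 to extract the factor $e^{AX+BY}$, and even after that reduction the statement for admissible $w$ still concerns regularized values of the non-admissible words $\frac{1}{1-xA}w\frac{1}{1-yB}$, so it is not a consequence of the classical Ohno relation.

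Concretely, what is missing corresponds to the bulk of the paper. First, one needs a multiplicativity statement reducing an arbitrary word to a single universal factor: in the paper this is Proposition~\ref{prop:H_prod}, $H(w_1\mathbin{\tilde*}w_2)=-H(w_1)H(w_2)\sin\pi(A-B)/\pi$, whose proof invokes the Kawashima relation together with the regularization theorem --- genuinely deep input that no amount of endpoint Beta-integral bookkeeping replaces. Second, one needs the algebraic identity (Proposition~\ref{prop:tau_o_tau}) expressing $\bar{\sigma}(w)-\sigma(w)$ as a symmetric harmonic product of $\sigma(w)$ with the universal series $\frac{yT}{1+yT}z$, which is what makes the multiplicativity applicable. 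Third, one needs the explicit evaluation of the universal factor (Proposition~\ref{prop:H_special}), which in the paper is a genuine ${}_2F_1$ computation via Gauss's summation theorem; your $w=x$ example is the simplest shadow of this but does not generalize by inspection. Without substitutes for these three steps the argument is a plausible heuristic for the shape of the answer, not a proof.
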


\begin{example}
Let $(k_{1},\dots,k_{d})\in\mathbb{Z}_{\geq1}^{d}$ be an admissible
index (i.e., $k_{d}>1$) and $(k_{1}',\dots,k_{r}')$ its dual index.
Let us look at the case $w=xyx^{k_{1}-1}\cdots yx^{k_{d}-1}$ in Theorem
\ref{thm:MainThm}. By using the notation $\zeta_{l}(k_{1},\dots,k_{d})\coloneqq Z^{\shuffle}(x^{l}yx^{k_{1}-1}\cdots yx^{k_{d}-1})$,
we have
\begin{align*}
Z_{Y,X}^{\shuffle}\circ\sigma\circ\tau(w) & =\sum_{m=0}^{\infty}T^{m}\sum_{\substack{\substack{e_{1}+\cdots+e_{r+1}=m\\
e_{1},\dots,e_{r+1}\geq0
}
}
}\zeta_{0}(k_{1}'+e_{1},\dots,k_{r}'+e_{r},1+e_{r+1})\\
 & \ +X\sum_{m=0}^{\infty}T^{m}\sum_{\substack{\substack{e_{1}+\cdots+e_{r}=m\\
e_{1},\dots,e_{r}\geq0
}
}
}\zeta(k_{1}'+e_{1},\dots,k_{r}'+e_{r})
\end{align*}
and
\begin{align*}
Z_{X,Y}^{\shuffle}\circ\sigma(w) & =\sum_{m=0}^{\infty}T^{m}\sum_{\substack{\substack{e_{1}+\cdots+e_{d}=m\\
e_{1},\dots,e_{d}\geq0
}
}
}\zeta_{1}(k_{1}+e_{1},\dots,k_{d}+e_{d})\\
 & \ +X\sum_{m=0}^{\infty}T^{m}\sum_{\substack{\substack{e_{1}+\cdots+e_{d}=m\\
e_{1},\dots,e_{d}\geq0
}
}
}\zeta(k_{1}+e_{1},\dots,k_{d}+e_{d}).
\end{align*}
Since $\rho(X)=X+\sum_{n=1}^{\infty}\zeta(n+1)T^{n}$, the equality
of the coefficients of $T^{m}X^{1}Y^{0}$ implies the original Ohno
relation, and the equality of the coefficients of $T^{m}X^{0}Y^{0}$
implies
\begin{align*}
 & \sum_{\substack{\substack{e_{1}+\cdots+e_{r+1}=m\\
e_{1},\dots,e_{r+1}\geq0
}
}
}\zeta_{0}(k_{1}'+e_{1},\dots,k_{r}'+e_{r},1+e_{r+1})\\
 & =\sum_{\substack{\substack{e_{1}+\cdots+e_{d}=m\\
e_{1},\dots,e_{d}\geq0
}
}
}\zeta_{1}(k_{1}+e_{1},\dots,k_{d}+e_{d})+\sum_{\substack{\substack{e_{1}+\cdots+e_{d}+n=m\\
e_{1},\dots,e_{d}\geq0,n\geq1
}
}
}\zeta(n+1)\zeta(k_{1}+e_{1},\dots,k_{d}+e_{d}).
\end{align*}
\end{example}

The following is an equivalent formulation of Theorem \ref{thm:MainThm}:
\begin{thm}[Equivalent formulation of Theorem \ref{thm:MainThm}]
\label{thm:MainThm_equiv}For $w\in\mathfrak{h}^{0}$, we have
\begin{equation}
Z^{\shuffle}\circ\sigma\circ\tau\left(\frac{1}{1-xA}w\frac{1}{1-yB}\right)=Z^{\shuffle}\circ\sigma\left(\frac{1}{1-xA}w\frac{1}{1-yB}\right)\times\frac{\Gamma(1+A)\Gamma(1-T+B)}{\Gamma(1+B)\Gamma(1-T+A)}.\label{eq:mainthm_e1}
\end{equation}
\end{thm}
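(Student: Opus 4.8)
Set $w_{A,B}:=\frac{1}{1-xA}\,w\,\frac{1}{1-yB}$ and, for $w\in\mathfrak h^{0}$, put $\Phi(w;A,B):=Z^{\shuffle}\circ\sigma(w_{A,B})$. Because $\tau$ reverses concatenation and interchanges $x$ and $y$, we have $\tau(w_{A,B})=(\tau w)_{B,A}$ with $\tau w\in\mathfrak h^{0}$, so \eqref{eq:mainthm_e1} is exactly the assertion
\[
\Phi(\tau w;B,A)=\Phi(w;A,B)\cdot\frac{\Gamma(1+A)\Gamma(1-T+B)}{\Gamma(1+B)\Gamma(1-T+A)}\qquad(w\in\mathfrak h^{0}).
\]
At $A=B=0$ this degenerates to the classical Ohno relation $Z\circ\sigma(\tau w)=Z\circ\sigma(w)$ of Theorem~\ref{thm:OhnoRel}, so the task is to produce the $(A,B)$-correction.

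My plan is to evaluate $\Phi$ as a convergent iterated integral. Recall the iterated-integral shape of the Ohno operator: for $w=e_{1}\cdots e_{n}\in\mathfrak h^{0}$ with $e_{i}\in\{x,y\}$,
\[
Z\circ\sigma(w)=\int_{0<t_{1}<\cdots<t_{n}<1}\prod_{i=1}^{n}\omega_{e_{i}}(t_{i})\cdot\prod_{j}\Bigl(\frac{t_{q_{j}+1}}{t_{q_{j}}}\Bigr)^{T},
\]
where $\omega_{x}=\tfrac{dt}{t}$, $\omega_{y}=\tfrac{dt}{1-t}$, $q_{j}=k_{1}+\cdots+k_{j}$ is the position where the $j$-th block $yx^{k_{j}-1}$ of $w$ ends, and $t_{n+1}:=1$. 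The key step is the identity, valid for $|A|,|B|$ small,
\[
\Phi(w;A,B)=\int_{0<t_{1}<\cdots<t_{n}<1}t_{1}^{A}(1-t_{n})^{B}\prod_{i=1}^{n}\omega_{e_{i}}(t_{i})\cdot\prod_{j}\Bigl(\frac{t_{q_{j}+1}}{t_{q_{j}}}\Bigr)^{T}:
\]
the effect of the $k$ leading $x$'s of $\tfrac{1}{1-xA}$ (weight $A^{k}$), once shuffle-regularized, is merely to twist the integrand by $t_{1}^{A}$, and symmetrically the $l$ trailing $y$'s of $\tfrac{1}{1-yB}$ twist it by $(1-t_{n})^{B}$, compatibly with the $T$-twists from $\sigma$. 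I would prove this by the standard shuffle-regularization of a leading run of $x$'s — for instance, in the simplest case one checks $Z^{\shuffle}\bigl(\tfrac{1}{1-xA}\,yx\bigr)=\sum_{k\ge0}(-1)^{k}(k+1)\zeta(k+2)A^{k}=\psi'(1+A)$, which indeed equals $\int_{0<t_{1}<t_{2}<1}t_{1}^{A}\tfrac{dt_{1}}{1-t_{1}}\tfrac{dt_{2}}{t_{2}}$ — combined with the regularized duality $Z^{\shuffle}\circ\tau=Z^{\shuffle}$ (which holds since $\tau$ is a shuffle automorphism restricting to the duality involution on $\mathfrak h^{0}$) to transfer the trailing-$y$ case to the leading-$x$ one.

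With this integral formula, both sides of the displayed identity are convergent iterated integrals in $A,B,T$. To compare $\Phi(w;A,B)$ with $\Phi(\tau w;B,A)$ I would apply the change of variables $t\mapsto1-t$ inside the integral for $\Phi(w;A,B)$: it matches the forms for $w$ with those for $\tau w$ and the endpoint twists $t_{1}^{A},(1-t_{n})^{B}$ with $(1-t_{n})^{A},t_{1}^{B}$, but turns the block twists $\prod_{j}(t_{q_{j}+1}/t_{q_{j}})^{T}$ into $(1-t)$-side twists inherited from the blocks of $w$. Theorem~\ref{thm:OhnoRel} says precisely that, integrated against the forms of $\tau w$ and with no endpoint twists, those $(1-t)$-side twists can be replaced by the $t$-side twists of the blocks of $\tau w$; reinstating the endpoint twists $t_{1}^{B},(1-t_{n})^{A}$, this replacement now costs a product of two one-dimensional boundary integrals of Beta type $\int t^{a-1}(1-t)^{b-1}\,dt=\Gamma(a)\Gamma(b)/\Gamma(a+b)$, with parameters built near $t=0$ from $A$ and $T$ and near $t=1$ from $B$ and $T$. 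Simplifying this product with $\log\Gamma(1+z)=-\gamma z+\sum_{n\ge2}\tfrac{(-1)^{n}\zeta(n)}{n}z^{n}$, all Euler--Mascheroni and logarithmic terms cancel and the ratio collapses to exactly $\tfrac{\Gamma(1+A)\Gamma(1-T+B)}{\Gamma(1+B)\Gamma(1-T+A)}$. (Equivalently, this last step can be organised as a connected-sum telescoping in which a Beta-function connector is transported from one end of the index to the other, classical Ohno governing the interior transport and the two boundary connectors supplying the Gamma ratio.)

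The main obstacle is the integral formula of the second paragraph: a priori the $T$-twists $t^{\pm T}$ reach into the endpoint regions, so one must verify that the shuffle regularization of the leading and trailing runs genuinely disentangles from these twists and produces precisely $t_{1}^{A}$ and $(1-t_{n})^{B}$, uniformly in $w$ — and, in the third paragraph, that the two boundary Beta integrals assemble into the stated Gamma ratio rather than a messier expression once $A,B,T$ are allowed to interact. I would handle the first point by establishing the closed form of $Z^{\shuffle}\bigl(\tfrac{1}{1-xA}\,v\,\tfrac{1}{1-yB}\bigr)$ for $v\in\mathfrak h^{0}$ and then its $\sigma$-deformation, which reduces the whole theorem to the convergent regime where Theorem~\ref{thm:OhnoRel} applies directly.
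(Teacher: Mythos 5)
Your strategy is genuinely different from the paper's, which is algebraic: it splits off the constant case, rewrites the $y\mathfrak{h}x$ case in terms of $z=x+y$, and combines the identity $\bar{\sigma}(w)=\sigma(w)+\varphi\bigl(\frac{yT}{1+yT}z\mstuffle\varphi(\sigma(w))\bigr)$ with a product formula for the map $H$ coming from Kawashima's relation and an explicit hypergeometric evaluation. Your reduction of \eqref{eq:mainthm_e1} to $\Phi(\tau w;B,A)=\Phi(w;A,B)\cdot(\Gamma\text{-ratio})$ is correct, and your integral representation of $\Phi(w;A,B)$ with endpoint twists $t_{1}^{A}(1-t_{n})^{B}$ checks out on low-weight examples — but it is a nontrivial lemma, not a routine one. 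Since $\sigma\bigl(\frac{1}{1-yB}\bigr)=\bigl(1-y(1-xT)^{-1}B\bigr)^{-1}$, each trailing $y$ becomes $yx^{e}$, so the words you must regularize are not of the form $(\text{admissible})\shuffle y^{b}$ and the divergence at $t=1$ genuinely mixes $B$ and $T$; showing that the $T$-twists telescope out of the trailing run and that regularization still yields exactly $(1-t_{n})^{B}$ is real work you only gesture at. The appeal to $Z^{\shuffle}\circ\tau=Z^{\shuffle}$ to transfer the trailing-$y$ case to the leading-$x$ case does not obviously help, because duality does not commute with $\sigma$ — that failure of commutation is the content of the theorem.

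The more serious gap is your third paragraph. Theorem \ref{thm:OhnoRel} is a single global identity between two convergent integrals with no endpoint twists; it does not supply a local transport statement that you can run in the interior of the simplex and then interrupt near $t=0$ and $t=1$ to collect boundary corrections. To make that step rigorous you would have to re-prove Ohno's relation in a connector or telescoping form and then compute the two boundary contributions in the simultaneous presence of $t_{1}^{A}$, $(1-t_{n})^{B}$, and the $T$-twists that reach the endpoints — and that computation is precisely the new content of the theorem, not a product of one-dimensional Beta integrals. The analogous step in the paper (Proposition \ref{prop:H_special}) is a genuinely two-dimensional integral requiring Gauss's summation of a ${}_{2}F_{1}$ at $1$, which is strictly stronger than the Beta evaluation; your assertion that "all Euler--Mascheroni and logarithmic terms cancel and the ratio collapses" to $\frac{\Gamma(1+A)\Gamma(1-T+B)}{\Gamma(1+B)\Gamma(1-T+A)}$ is an unproved claim standing in for the hardest part of the argument. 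As written, the proposal is a plausible programme rather than a proof.
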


\section{Proof of the equivalence of Theorems \ref{thm:MainThm} and \ref{thm:MainThm_equiv}}

Put $\bar{\sigma}=\tau\circ\sigma\circ\tau$. Then since $Z_{Y,X}^{\shuffle}=Z_{X,Y}^{\shuffle}\circ\tau$,
we have $Z_{Y,X}^{\shuffle}\circ\sigma\circ\tau=Z_{X,Y}^{\shuffle}\circ\bar{\sigma}$,
which also implies that $Z^{\shuffle}\circ\sigma\circ\tau=Z^{\shuffle}\circ\bar{\sigma}$.
Note that any element of $\mathfrak{h}$ can be written as a linear
combination of the terms $x^{a}wy^{b}$ with $w\in\mathfrak{h}^{0}$
and $a,b\in\mathbb{Z}_{\geq0}$. Thus Theorem \ref{thm:MainThm} is
equivalent to the statement that
\begin{equation}
Z_{X,Y}^{\shuffle}\circ\bar{\sigma}\left(\frac{1}{1-xA}w\frac{1}{1-yB}\right)=\rho\circ Z_{X,Y}^{\shuffle}\circ\sigma\left(\frac{1}{1-xA}w\frac{1}{1-yB}\right)\label{eq:equiv_form1}
\end{equation}
for all $w\in\mathfrak{h}^{0}$. Define a linear map ${\rm reg}_{\shuffle}:\mathfrak{h}\to\mathfrak{h}^{0}$
by
\[
{\rm reg}_{\shuffle}(w\shuffle x^{a}\shuffle y^{b})=w\delta_{a,0}\delta_{b,0}\ \ \ (w\in\mathfrak{h}^{0}).
\]
Then $Z^{\shuffle}=Z\circ{\rm reg}_{\shuffle}$. For $a,b\in\mathbb{Z}_{\geq0}$,
define a linear map $D_{a,b}:\mathfrak{h}\to\mathfrak{h}$ by
\[
D_{a,b}(u_{1}\cdots u_{m})=\begin{cases}
u_{a+1}\cdots u_{m-b} & m\geq a+b,u_{1}=\cdots=u_{a}=x,u_{m-b+1}=\cdots=u_{m}=y\\
0 & {\rm otherwise},
\end{cases}
\]
where $u_{1},\dots,u_{m}\in\{x,y\}$. Then we have
\begin{equation}
w=\sum_{a=0}^{\infty}\sum_{b=0}^{\infty}{\rm reg}_{\shuffle}(D_{a,b}(w))\shuffle x^{a}\shuffle y^{b}\label{eq:shuffle_factorization}
\end{equation}
for $w\in\mathfrak{h}$ as a special case of the following lemma.
\begin{lem}[{\cite[Lemma  3.2.4 and equation (3.2.20)]{Panzer}}]
Let $\Sigma$ be a set, and $\mathcal{A}\coloneqq\mathbb{Q}\left\langle \Sigma\right\rangle $
the free non-commutative polynomial ring generated by $\Sigma$. Define
a linear map $c:\mathcal{A}\to\mathbb{Q}$ by $c(u_{1}\cdots u_{k})=\delta_{k,0}$,
where $u_{1},\dots,u_{k}\in\Sigma$. Let $A$ and $B$ be disjoint
subsets of $\Sigma$. Put
\[
\mathcal{A}^{0}\coloneqq\sum_{k=0}^{\infty}\sum_{\substack{u_{1},\dots,u_{k}\in\Sigma\\
u_{1}\notin B,u_{k}\notin A
}
}u_{1}\cdots u_{k}\mathbb{Q}\subset\mathcal{A}
\]
and define a linear map $s:\mathcal{A}^{0}\otimes\mathbb{Q}\left\langle A\right\rangle \otimes\mathbb{Q}\left\langle B\right\rangle \to\mathcal{A}$
by $s(w_{1}\otimes w_{2}\otimes w_{3})=w_{1}\shuffle w_{2}\shuffle w_{3}$.
Then $s$ is bijective. Furthermore, for $(a_{1},\dots,a_{r})\in A^{r}$,
$(b_{1},\dots,b_{l})\in B^{l}$, and $w\in\mathcal{A}^{0}$, we have
\[
b_{1}\cdots b_{l}wa_{r}\cdots a_{1}=\sum_{i=0}^{l}\sum_{j=0}^{r}b_{1}\cdots b_{i}\shuffle a_{j}\cdots a_{1}\shuffle{\rm reg}_{A}^{B}\left(b_{i+1}\cdots b_{l}wa_{r}\cdots a_{j+1}\right),
\]
where ${\rm reg}_{A}^{B}$ is the map from $\mathcal{A}$ to $\mathcal{A}^{0}$
defined by ${\rm reg}_{A}^{B}=\left({\rm id}\otimes c\otimes c\right)\circ s^{-1}$.
\end{lem}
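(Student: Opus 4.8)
The plan is to reduce the whole statement to the classical one-variable shuffle regularization theorem, applied once on the right (for the set $A$) and once on the left (for the set $B$); the hypothesis $A\cap B=\varnothing$ is precisely what keeps the two reductions from interfering.

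\textbf{Step 1: one-sided regularization.} For a subset $C\subseteq\Sigma$ let $\mathfrak{p}_C\subseteq\mathcal{A}$ be the span of the words not ending in a letter of $C$ (the empty word included), and let $m_C\colon\mathfrak{p}_C\otimes\mathbb{Q}\langle C\rangle\to\mathcal{A}$ be shuffle multiplication, $v\otimes u\mapsto v\shuffle u$. I would first show $m_C$ is a $\mathbb{Q}$-linear isomorphism by the usual Radford-type argument: it preserves the multidegree (the tuple of letter-counts), each multidegree component is finite dimensional, and for a word $W=v\,c_r\cdots c_1$ with $c_r\cdots c_1$ the maximal suffix of $W$ consisting of letters from $C$ and $v\in\mathfrak{p}_C$, one has $v\shuffle(c_r\cdots c_1)=W+(\text{a }\mathbb{Q}\text{-combination of words of the same multidegree whose maximal }C\text{-suffix is strictly shorter})$, because the last letter of $v$ lies outside $C$, so the only interleaving of $v$ with $c_r\cdots c_1$ producing a word with a length-$r$ maximal $C$-suffix is the one leaving the $c_i$ at the end. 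Hence $\{v\shuffle u\}$ is a unitriangular transform of the word basis in each multidegree, so a basis, and $m_C$ is bijective. Put $\mathrm{reg}_C:=(\mathrm{id}\otimes c)\circ m_C^{-1}\colon\mathcal{A}\to\mathfrak{p}_C$; unwinding the triangular recursion gives the one-sided formula $v\,c_r\cdots c_1=\sum_{j=0}^{r}(c_j\cdots c_1)\shuffle\mathrm{reg}_C(v\,c_r\cdots c_{j+1})$ for $v\in\mathfrak{p}_C$. Applying the anti-automorphism of $\mathcal{A}$ that reverses words (which preserves $\shuffle$ and fixes $\mathbb{Q}\langle B\rangle$ setwise) yields the mirror statement: with $\mathfrak{q}_B$ the span of words not starting with a $B$-letter, $m^B\colon\mathbb{Q}\langle B\rangle\otimes\mathfrak{q}_B\to\mathcal{A}$ is an isomorphism, $\mathrm{reg}^B:=(c\otimes\mathrm{id})\circ(m^B)^{-1}$, and $b_1\cdots b_l\,v=\sum_{i=0}^{l}(b_1\cdots b_i)\shuffle\mathrm{reg}^B(b_{i+1}\cdots b_l\,v)$ for $v\in\mathfrak{q}_B$.

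\textbf{Step 2: bijectivity of $s$.} This is where disjointness enters. Because $A\cap B=\varnothing$, the subspace $\mathfrak{q}_B$ is closed under $\shuffle$ (a shuffle of two words not starting with $B$ has every term not starting with $B$) and contains $\mathbb{Q}\langle A\rangle$; moreover $\mathfrak{q}_B\cap\mathfrak{p}_A=\mathcal{A}^0$. Running the argument of Step 1 with $C=A$ entirely inside $\mathfrak{q}_B$—every word of $\mathfrak{q}_B$ decomposes as $v\,a_r\cdots a_1$ with $v\in\mathcal{A}^0$, and all the lower-order terms stay in $\mathfrak{q}_B$—shows that $\mathcal{A}^0\otimes\mathbb{Q}\langle A\rangle\to\mathfrak{q}_B$, $v\otimes u\mapsto v\shuffle u$, is an isomorphism. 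Composing with $m^B$ and permuting the three tensor factors exhibits $s$ as a composite of isomorphisms, so $s$ is bijective. Tracking the two counits through this composite also shows $\mathrm{reg}_A^B=\mathrm{reg}_A\circ\mathrm{reg}^B$ (regularize on the left first, then on the right), using that $\mathrm{reg}_A$ sends $\mathfrak{q}_B$ into $\mathcal{A}^0$.

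\textbf{Step 3: the factorization formula.} Write $W=b_1\cdots b_l\,w\,a_r\cdots a_1$ with $w\in\mathcal{A}^0$, so that $w\,a_r\cdots a_1\in\mathfrak{q}_B$. Applying the left formula of Step 1 peels off $b_1\cdots b_l$, and expanding each $\mathrm{reg}^B(b_{i+1}\cdots b_l\,w\,a_r\cdots a_1)\in\mathfrak{q}_B$ through the restricted inverse $m_A^{-1}\colon\mathfrak{q}_B\xrightarrow{\sim}\mathcal{A}^0\otimes\mathbb{Q}\langle A\rangle$ peels off the $a$-suffix; combined with $\mathrm{reg}_A^B=\mathrm{reg}_A\circ\mathrm{reg}^B$ and the commutativity and associativity of $\shuffle$, this should reassemble into the asserted double sum, provided one knows the compatibility
\[
m_A^{-1}\bigl(\mathrm{reg}^B(b_{i+1}\cdots b_l\,w\,a_r\cdots a_1)\bigr)=\sum_{j=0}^{r}\mathrm{reg}_A^B(b_{i+1}\cdots b_l\,w\,a_r\cdots a_{j+1})\otimes(a_j\cdots a_1).
\]
I expect this compatibility to be the main obstacle: $\mathrm{reg}^B$ genuinely interacts with trailing $A$-letters (for instance $\mathrm{reg}^B(bca)=-cba-cab$ when $b\in B$, $c\notin A\cup B$, $a\in A$, and the word $cba$ ends in a letter of $A$), so it is not obtained by a naive commutation of $\mathrm{reg}^B$ past $m_A^{-1}$. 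I would prove it by induction on the number $l-i$ of remaining $B$-letters, using the defining recursions of $\mathrm{reg}^B$ and $m_A^{-1}$ from Step 1 together with $\mathrm{reg}_A^B=\mathrm{reg}_A\circ\mathrm{reg}^B$ and the idempotence $\mathrm{reg}_A\circ\mathrm{reg}_A=\mathrm{reg}_A$. Alternatively, the displayed identity of the lemma can be attacked directly by induction on $l+r$, the base case $l=r=0$ being $\mathrm{reg}_A^B(w)=w$ for $w\in\mathcal{A}^0$; but one still has to carry the regularization map through the induction, which is the same difficulty in disguise.
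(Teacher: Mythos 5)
The paper offers no proof of this lemma at all---it is quoted directly from Panzer's thesis---so there is nothing in the text to compare your argument against; I can only judge the proposal on its own terms. Your Steps 1 and 2 are correct and standard: the unitriangularity of $v\shuffle (c_r\cdots c_1)$ with respect to the length of the maximal $C$-suffix (within each finite-dimensional multidegree component) does give the one-sided Radford-type isomorphisms, and your use of disjointness---that $\mathfrak{q}_B$ is a $\shuffle$-subalgebra containing $\mathbb{Q}\langle A\rangle$ with $\mathfrak{q}_B\cap\mathfrak{p}_A=\mathcal{A}^0$, and that the triangular expansion for $C=A$ stays inside $\mathfrak{q}_B$---correctly exhibits $s$ as a composite of two one-sided isomorphisms, proving bijectivity and $\mathrm{reg}_A^B=\mathrm{reg}_A\circ\mathrm{reg}^B$.

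The gap is exactly where you locate it, and it is genuine: the displayed compatibility in Step 3 is not bookkeeping but is the whole content of the two-sided formula, and ``I would prove it by induction on $l-i$'' is a plan, not a proof. As your own example $\mathrm{reg}^B(bca)=-cba-cab$ shows, the words occurring in $\mathrm{reg}^B(\cdots)$ have maximal $A$-suffixes unrelated to $a_r\cdots a_1$, so applying the one-sided formula for $m_A^{-1}$ termwise does not visibly produce only the tensors $(\,\cdot\,)\otimes(a_j\cdots a_1)$; some cancellation must be established, and that is the hard part. A clean way to close it (which also gives the explicit one-sided formulas of Step 1 for free) is via letter-stripping derivations: for $a\in A$ set $\partial_a(wc)=\delta_{a,c}w$ and $\partial_a(1)=0$, and for $b\in B$ set $\partial^b(cw)=\delta_{b,c}w$ and $\partial^b(1)=0$. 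Both are $\shuffle$-derivations; since $A\cap B=\varnothing$, $\partial_a$ annihilates $\mathcal{A}^0$ and $\mathbb{Q}\langle B\rangle$ while $\partial^b$ annihilates $\mathcal{A}^0$ and $\mathbb{Q}\langle A\rangle$, whence $\partial_a\circ s=s\circ(\mathrm{id}\otimes\partial_a\otimes\mathrm{id})$ and $\partial^b\circ s=s\circ(\mathrm{id}\otimes\mathrm{id}\otimes\partial^b)$, and the same intertwining holds for $s^{-1}$. Writing $s^{-1}(W)=\sum_{u\in A^{*},\,v\in B^{*}}R_{u,v}(W)\otimes u\otimes v$ and iterating these relations shows that $R_{u,v}(W)$ equals $\mathrm{reg}_A^B$ applied to the word obtained from $W$ by stripping the suffix $u$ and the prefix $v$, and vanishes when no such stripping exists; for $W=b_1\cdots b_lwa_r\cdots a_1$ with $w\in\mathcal{A}^0$ the only surviving pairs are $(u,v)=(a_j\cdots a_1,\,b_1\cdots b_i)$, and applying $s$ to both sides yields the asserted identity with no further induction.
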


By (\ref{eq:shuffle_factorization}), we have
\[
Z_{X,Y}^{\shuffle}(w)=\sum_{a=0}^{\infty}\sum_{b=0}^{\infty}\frac{X^{a}Y^{b}}{a!b!}Z^{\shuffle}(D_{a,b}(w))\ \ \ (w\in\mathfrak{h}).
\]
Thus the left-hand side of (\ref{eq:equiv_form1}) is
\begin{align*}
Z_{X,Y}^{\shuffle}\circ\bar{\sigma}\left(\frac{1}{1-xA}w\frac{1}{1-yB}\right) & =\sum_{a=0}^{\infty}\sum_{b=0}^{\infty}\frac{X^{a}Y^{b}}{a!b!}Z^{\shuffle}\circ D_{a,b}\circ\bar{\sigma}\left(\frac{1}{1-xA}w\frac{1}{1-yB}\right).
\end{align*}
Let us calculate $D_{a,b}\circ\bar{\sigma}(\frac{1}{1-xA}w\frac{1}{1-yB})$.
By definition,
\[
\bar{\sigma}\left(\frac{1}{1-xA}w\frac{1}{1-yB}\right)=\sum_{k=0}^{\infty}\sum_{l=0}^{\infty}\left(\frac{1}{1-yT}x\right)^{k}\bar{\sigma}(w)y^{l}A^{k}B^{l}.
\]
Since $\bar{\sigma}(w)\in\mathfrak{h}^{0}$, we have
\[
D_{a,b}\left(\left(\frac{1}{1-yT}x\right)^{k}\bar{\sigma}(w)y^{l}\right)=0
\]
if $k<a$ or $l<b$. Thus
\begin{align*}
D_{a,b}\circ\bar{\sigma}\left(\frac{1}{1-xA}w\frac{1}{1-yB}\right) & =\sum_{k=a}^{\infty}\sum_{l=b}^{\infty}D_{a,b}\left(\left(\frac{1}{1-yT}x\right)^{k}\bar{\sigma}(w)y^{l}\right)A^{k}B^{l}\\
 & =A^{a}B^{b}D_{a,b}\left(\left(\frac{1}{1-yT}x\right)^{a}\left(\sum_{k=0}^{\infty}\sum_{l=0}^{\infty}\left(\frac{1}{1-yT}x\right)^{k}\bar{\sigma}(w)y^{l}A^{k}B^{l}\right)y^{b}\right)\\
 & =A^{a}B^{b}D_{a,b}\left(\left(\frac{1}{1-yT}x\right)^{a}\bar{\sigma}\left(\frac{1}{1-xA}w\frac{1}{1-yB}\right)y^{b}\right)\\
 & =A^{a}B^{b}\bar{\sigma}\left(\frac{1}{1-xA}w\frac{1}{1-yB}\right)\ \ \ (\text{by the definition of \ensuremath{D_{a,b}}}).
\end{align*}
Therefore, we have
\begin{align*}
Z_{X,Y}^{\shuffle}\circ\bar{\sigma}\left(\frac{1}{1-xA}w\frac{1}{1-yB}\right) & =\sum_{a=0}^{\infty}\sum_{b=0}^{\infty}\frac{X^{a}Y^{b}}{a!b!}A^{a}B^{b}\times Z^{\shuffle}\circ\bar{\sigma}\left(\frac{1}{1-xA}w\frac{1}{1-yB}\right)\\
 & =e^{AX+BY}\times Z^{\shuffle}\circ\bar{\sigma}\left(\frac{1}{1-xA}w\frac{1}{1-yB}\right).
\end{align*}
Similarly the right-hand side of (\ref{eq:equiv_form1}) is
\begin{align*}
 & \rho\circ Z_{X,Y}^{\shuffle}\circ\sigma\left(\frac{1}{1-xA}w\frac{1}{1-yB}\right)\\
 & =\rho\left(e^{AX+BY}\times Z^{\shuffle}\circ\sigma\left(\frac{1}{1-xA}w\frac{1}{1-yB}\right)\right)\\
 & =e^{AX+BY}\times Z^{\shuffle}\circ\sigma\left(\frac{1}{1-xA}w\frac{1}{1-yB}\right)\times\frac{\Gamma(1+A)\Gamma(1-T+B)}{\Gamma(1+B)\Gamma(1-T+A)}.
\end{align*}
Thus (\ref{eq:equiv_form1}) is equivalent to (\ref{eq:mainthm_e1}).

\section{Symmetric harmonic product}

Put $\mathfrak{h}'\coloneqq y\mathfrak{h}\oplus x\mathfrak{h}$. Let
$e_{0}=x$, $e_{1}=-y$. Define the \emph{symmetric harmonic product}
$\mstuffle:\mathfrak{h}'\otimes\mathfrak{h}'\to\mathfrak{h}'$ by

\begin{align*}
e_{a}\mstuffle e_{b_{1}}\cdots e_{b_{n}} & =e_{b_{1}}\cdots e_{b_{n}}\mstuffle e_{a}=e_{ab_{1}}\cdots e_{ab_{n}},\\
e_{a}w_{1}\mstuffle e_{b}w_{2} & =e_{ab}(w_{1}\mstuffle e_{b}w_{2})+e_{ab}(e_{a}w_{1}\mstuffle w_{2})-e_{ab}e_{0}(w_{1}\mstuffle w_{2})\ \ \ (w_{1},w_{2}\in\mathfrak{h}'),
\end{align*}
where $a,b,b_{1},\dots,b_{n}\in\{0,1\}$, and by bilinearity. Then
$\mstuffle$ has a following combinatorial description which is similar
to the one for $*$ given in \cite{AlgDiff}:
\[
e_{a_{0}}\cdots e_{a_{m}}\mstuffle e_{b_{0}}\cdots e_{b_{n}}=\sum_{{\bf p}=(p_{0},\dots,p_{m+n})}(-1)^{\#\{i\,\mid\,p_{i}\in(1/2+\mathbb{Z})^{2}\}}e_{c(p_{0})}\cdots e_{c(p_{m+n})},
\]
where ${\bf p}=(p_{0},\dots,p_{m+n})$ runs over all sequences of
elements of $(\frac{1}{2}\mathbb{Z})^{2}$ such that $p_{0}=(0,0)$,
$p_{m+n}=(m,n)$, $p_{i+1}-p_{i}\in\{(0,1),(1,0),(\frac{1}{2},\frac{1}{2})\}$,
and $\{p_{i},p_{i+1}\}\not\subset(\frac{1}{2}+\mathbb{Z})^{2}$ for
all $0\leq i<m+n$, and $c(p_{i})$ is defined by
\[
c(p_{i})=\begin{cases}
a_{x}b_{y} & p_{i}=(x,y)\in\mathbb{Z}^{2},\\
0 & p_{i}\in(\frac{1}{2}+\mathbb{Z})^{2}.
\end{cases}
\]

By this combinatorial description, we can see that the symmetric harmonic
product $\mstuffle$ is compatible with the reversal operation, i.e.,
\begin{equation}
\overleftarrow{w_{1}}\mstuffle\overleftarrow{w_{2}}=\overleftarrow{w_{1}\mstuffle w_{2}}\ \ \ (w_{1},w_{2}\in\mathfrak{h}'),\label{eq:harm_reverse}
\end{equation}
where $\overleftarrow{w}$ is the reversal word of $w$. By definition,
we have
\begin{equation}
w_{1}e_{1}\mstuffle w_{2}e_{1}=(w_{1}*w_{2})e_{1}\ \ \ (w_{1},w_{2}\in\mathfrak{h}),\label{eq:add_e1}
\end{equation}
where $*:\mathfrak{h}\times\mathfrak{h}\to\mathfrak{h}$ is the usual
harmonic product defined in \cite{AlgDiff}. Note that by the correspondence
\[
yx^{k_{1}-1}\cdots yx^{k_{d}-1}\longleftrightarrow(k_{1},\dots,k_{d}),
\]
the product $*$ on $\mathbb{Q}+y\mathfrak{h}$ corresponds to the
harmonic product on indices (see \cite{AlgDiff} for details).

Thus we have
\begin{align}
xw_{1}\mstuffle w_{2} & =w_{1}\mstuffle xw_{2}=x(w_{1}\mstuffle w_{2})\ \ \ (w_{1},w_{2}\in\mathfrak{h}'),\label{eq:front_x}\\
yw_{1}\mstuffle yw_{2} & =y(w_{1}\mstuffle yw_{2})+y(yw_{1}\mstuffle w_{2})+yx(w_{1}\mstuffle w_{2})\ \ \ (w_{1},w_{2}\in\mathfrak{h}').\label{eq:front_y}
\end{align}
By (\ref{eq:harm_reverse}), (\ref{eq:front_x}), and (\ref{eq:front_y}),
we have
\begin{align}
w_{1}x\mstuffle w_{2} & =w_{1}\mstuffle w_{2}x=(w_{1}\mstuffle w_{2})x\ \ \ (w_{1},w_{2}\in\mathfrak{h}'),\label{eq:back_x}\\
w_{1}y\mstuffle w_{2}y & =(w_{1}\mstuffle w_{2}y)y+(w_{1}y\mstuffle w_{2})y+(w_{1}\mstuffle w_{2})xy\ \ \ (w_{1},w_{2}\in\mathfrak{h}').\label{eq:back_yy}
\end{align}
Furthermore, putting $z=x+y$, we also have
\begin{equation}
w_{1}z\mstuffle w_{2}z=-(w_{1}*w_{2})z\ \ \ (w_{1},w_{2}\in\mathfrak{h})\label{eq:add_z}
\end{equation}
since
\begin{align*}
w_{1}z\mstuffle w_{2}z & =w_{1}y\mstuffle w_{2}y+(w_{1}\mstuffle w_{2}y)x+(w_{1}z\mstuffle w_{2})x=(w_{1}\mstuffle w_{2}y)z+(w_{1}z\mstuffle w_{2})z\\
 & =R_{y}^{-1}(w_{1}y\mstuffle w_{2}y)z=-(w_{1}*w_{2})z,
\end{align*}
where $R_{y}:\mathfrak{h}\to\mathfrak{h}y$ is defined by $R_{y}(w)=wy$.

\section{\label{sec:Overview}Proof of the main theorem assuming a few propositions}

In this section, we give a proof of Theorem \ref{thm:MainThm_equiv},
assuming some propositions which will be proved in later sections.
Since $\mathfrak{h}^{0}\coloneqq\mathbb{Q}\oplus y\mathfrak{h}x$,
Theorem \ref{thm:MainThm_equiv} is reduced to the two claims
\begin{equation}
Z^{\shuffle}\circ\bar{\sigma}\left(\frac{1}{1-xA}\frac{1}{1-yB}\right)=Z^{\shuffle}\circ\sigma\left(\frac{1}{1-xA}\frac{1}{1-yB}\right)\times\frac{\Gamma(1+A)\Gamma(1-T+B)}{\Gamma(1+B)\Gamma(1-T+A)}\label{eq:main_const}
\end{equation}
and
\begin{equation}
\forall w\in y\mathfrak{h}x,\ \ Z^{\shuffle}\circ\bar{\sigma}\left(\frac{1}{1-xA}w\frac{1}{1-yB}\right)=Z^{\shuffle}\circ\sigma\left(\frac{1}{1-xA}w\frac{1}{1-yB}\right)\times\frac{\Gamma(1+A)\Gamma(1-T+B)}{\Gamma(1+B)\Gamma(1-T+A)}.\label{eq:main_yhx}
\end{equation}
Then (\ref{eq:main_yhx}) is equivalent to
\begin{equation}
\forall w\in y\mathfrak{h}x,\ \ Z^{\shuffle}\circ\bar{\sigma}\left(\frac{1}{1-zA}w\frac{1}{1-zB}\right)=Z^{\shuffle}\circ\sigma\left(\frac{1}{1-zA}w\frac{1}{1-zB}\right)\times\frac{\Gamma(1+A)\Gamma(1-T+B)}{\Gamma(1+B)\Gamma(1-T+A)}\label{eq:main_yhx_z}
\end{equation}
since
\begin{align*}
\left\{ \frac{1}{1-xA}w\frac{1}{1-yB}\Bigg|w\in y\hat{\mathfrak{h}}x\right\}  & =\{u\in\hat{\mathfrak{h}}\mid(1-xA)u(1-yB)\in y\hat{\mathfrak{h}}x\}\\
 & =\{u\in\hat{\mathfrak{h}}\mid(1-zA)u(1-zB)\in y\hat{\mathfrak{h}}x\}\\
 & =\left\{ \frac{1}{1-zA}w\frac{1}{1-zB}\Bigg|w\in y\hat{\mathfrak{h}}x\right\} ,
\end{align*}
where $\hat{\mathfrak{h}}=\mathfrak{h}[[A,B]]$.

Let $\varphi$ be the automorphism of $\mathfrak{h}$ defined by $\varphi(x)=z$,
$\varphi(y)=-y$. Note that $\varphi\circ\varphi={\rm id}$. Define
$H:\mathfrak{h}\to\mathbb{R}[[A,B]]$ by
\[
H(w)=Z^{\shuffle}\circ\varphi\left(\frac{1}{1-xA}w\frac{1}{1-xB}\right).
\]
Our main theorem can be proved by combining the following propositions
which will be proved in later sections:
\begin{prop}
\label{prop:tau_o_tau}For $w\in\mathfrak{h}$, we have
\end{prop}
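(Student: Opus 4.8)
The plan is to remove the twisted operator $\bar\sigma$ from the left-hand side of Theorem~\ref{thm:MainThm_equiv}, replacing it by the original $\sigma$ at the price of passing to the dual word and interchanging the variables $A$ and $B$. The one external ingredient I need is the $\tau$-symmetry of shuffle-regularized values recorded in Section~2: from $Z^{\shuffle}_{Y,X}=Z^{\shuffle}_{X,Y}\circ\tau$, setting $X=Y=0$ gives $Z^{\shuffle}\circ\tau=Z^{\shuffle}$ as linear maps $\mathfrak{h}\to\mathbb{R}$, hence also on $\mathfrak{h}\otimes\mathbb{Q}[[T,A,B]]$ after the coefficientwise extension fixed in the introduction.

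First I would invoke the definition $\bar\sigma=\tau\circ\sigma\circ\tau$: since $Z^{\shuffle}\circ\tau=Z^{\shuffle}$, precomposition with the outer $\tau$ is invisible to $Z^{\shuffle}$, so $Z^{\shuffle}\circ\bar\sigma=Z^{\shuffle}\circ\tau\circ(\sigma\circ\tau)=Z^{\shuffle}\circ\sigma\circ\tau$. It then remains to carry $\tau$ through the geometric-series wrappers. As $\tau$ is an anti-automorphism with $\tau(x)=y$ and $\tau(y)=x$, it reverses products and sends $x^{k}\mapsto y^{k}$, $y^{l}\mapsto x^{l}$, whence $\tau\bigl(\frac{1}{1-xA}\,w\,\frac{1}{1-yB}\bigr)=\frac{1}{1-xB}\,\tau(w)\,\frac{1}{1-yA}$; combined with the previous identity this yields the statement. (If the proposition is instead written in the variable $z=x+y$, as in the reduction $(\ref{eq:main_yhx_z})$, one uses $\tau(z)=z$, so that $\tau$ fixes each of the factors $\frac{1}{1-zA}$ and $\frac{1}{1-zB}$ and only transposes them, with the same outcome.)

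I do not anticipate a genuine obstacle here: the proposition is a formal consequence of $Z^{\shuffle}\circ\tau=Z^{\shuffle}$ together with $\bar\sigma=\tau\circ\sigma\circ\tau$, and the main ``hard part'' is purely bookkeeping. The one point needing attention is that $\tau$, $\sigma$, and $Z^{\shuffle}$ are all extended coefficientwise in the formal parameters and that this extension commutes with the infinite sums $\frac{1}{1-xA}=\sum_{k\ge0}x^{k}A^{k}$, and so on; this is routine because every map in sight respects the word-length grading of $\mathfrak{h}$, whose homogeneous components are finite dimensional, so the power-series identities can be checked coefficient by coefficient. Should one prefer not to quote $Z^{\shuffle}\circ\tau=Z^{\shuffle}$, its proof is the compatibility of shuffle regularization with the change of variables $t\mapsto 1-t$ in the iterated-integral representation of MZVs, but since this is already used in Section~2 I would simply cite it there.
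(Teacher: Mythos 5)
Your proposal does not prove this proposition; it proves a different and much weaker statement. Proposition \ref{prop:tau_o_tau} is the identity (\ref{eq:omw}),
\[
\bar{\sigma}(w)=\sigma(w)+\varphi\left(\frac{yT}{1+yT}z\mstuffle\varphi(\sigma(w))\right),
\]
an equality in $\mathfrak{h}[[T]]$ itself, i.e.\ of noncommutative power series \emph{before} any evaluation map is applied. Your argument never produces the correction term, never touches $\varphi$ or the symmetric harmonic product $\mstuffle$, and only manipulates $Z^{\shuffle}$ composed with $\bar{\sigma}$ and $\sigma\circ\tau$. What you do establish, namely $Z^{\shuffle}\circ\bar{\sigma}=Z^{\shuffle}\circ\sigma\circ\tau$ together with $\tau\bigl(\frac{1}{1-xA}w\frac{1}{1-yB}\bigr)=\frac{1}{1-xB}\tau(w)\frac{1}{1-yA}$, is exactly the routine reduction already carried out in Section 2 of the paper; it cannot substitute for the proposition, whose entire purpose is to express the difference $\bar{\sigma}(w)-\sigma(w)$ in a form to which Propositions \ref{prop:H_prod} and \ref{prop:H_special} (resting on the Kawashima relation and a hypergeometric evaluation) can be applied. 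If the proposition really were a formal consequence of $Z^{\shuffle}\circ\tau=Z^{\shuffle}$, the gamma factor in the main theorem would have to be identically $1$, which it is not.

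The missing content is the following. The paper introduces, for $n\geq1$, the operators $f_{n}(w)=y^{n}\diamond w-(y^{n-1}\diamond w)y$ built from the $\diamond$ product of \cite{specific}, proves the closed form $f_{n}(w)=(-1)^{n+1}\varphi\left(y^{n}z\mstuffle\varphi(w)\right)$, and then shows by a double induction (on $m$ and on word length, via the matching recurrences of Lemma \ref{lem:same_recurrence}) that $\bar{\sigma}_{m}(w)=\sum_{j=0}^{m}f_{j}(\sigma_{m-j}(w))$, where $\sigma=\sum_{m}T^{m}\sigma_{m}$ and $\bar{\sigma}_{m}=\tau\circ\sigma_{m}\circ\tau$; summing over $m$ gives the proposition. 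None of this is bookkeeping about coefficientwise extensions or the duality $Z^{\shuffle}\circ\tau=Z^{\shuffle}$, and you would need to supply an argument of comparable substance, carried out inside $\mathfrak{h}[[T]]$, to close the gap.
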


\begin{equation}
\bar{\sigma}(w)=\sigma(w)+\varphi\left(\frac{yT}{1+yT}z\mstuffle\varphi(\sigma(w))\right).\label{eq:omw}
\end{equation}

\begin{prop}
\label{prop:H_prod}For $w_{1},w_{2}\in y\mathfrak{h}z$, we have
\[
H(w_{1}\mstuffle w_{2})=-H(w_{1})H(w_{2})\frac{\sin\pi(A-B)}{\pi}.
\]
\end{prop}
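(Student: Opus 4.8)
The plan is to reduce the symmetric harmonic product to the usual harmonic product $*$, then to compute $H$ on $y\mathfrak{h}z$ in closed form and read off the gamma factor.

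First I would invoke (\ref{eq:add_z}). Since $w_1,w_2\in y\mathfrak{h}z$, write $w_i=v_iz$ with $v_i\in y\mathfrak{h}$; then (\ref{eq:add_z}) gives $w_1\mstuffle w_2=-(v_1*v_2)z$, and $v_1*v_2\in y\mathfrak{h}$, so $(v_1*v_2)z\in y\mathfrak{h}z$. Since $H$ is linear the two minus signs cancel, so the assertion is equivalent to
\[
H((v_1*v_2)z)=H(v_1z)\,H(v_2z)\,\frac{\sin\pi(A-B)}{\pi}\qquad(v_1,v_2\in y\mathfrak{h}).
\]
As $\mathbb{R}[[A,B]]$ is an integral domain and $\sin\pi(A-B)/\pi$ is a nonzero element, this says exactly that $v\mapsto\frac{\sin\pi(A-B)}{\pi}H(vz)$ is multiplicative with respect to $*$ on $y\mathfrak{h}$.

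The key point is that $\varphi$ carries $y\mathfrak{h}z$ into $y\mathfrak{h}x\subset\mathfrak{h}^{0}$: since $\varphi$ is an algebra automorphism with $\varphi(z)=x$ and $\varphi(v)\in y\mathfrak{h}$, the word $\varphi(vz)=\varphi(v)\,x$ is admissible. Hence
\[
H(vz)=Z^{\shuffle}\!\left(\frac{1}{1-zA}\,\varphi(v)\,x\,\frac{1}{1-zB}\right)=\sum_{a,b\geq0}A^{a}B^{b}\,Z^{\shuffle}(z^{a}\,\varphi(v)\,x\,z^{b}),
\]
and I would evaluate this in closed form. Expanding $z=x+y$ and performing the shuffle regularization via (\ref{eq:shuffle_factorization}) (the Panzer lemma), each word reduces to shuffle-regularized multiple zeta values, which in turn resum to multiple-Hurwitz-zeta-type series once the relevant iterated integrals are evaluated with regularized base points at $0$ and $1$ (for admissible $u$ of index $(k_1,\dots,k_d)$ one has, e.g., $Z^{\shuffle}(\tfrac{1}{1-xA}u)=\sum_{0<m_1<\cdots<m_d}\prod_{i}(m_i+A)^{-k_i}$). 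The outcome is an explicit multiple-zeta-type expression for $H(vz)$ carrying the two shift parameters $A$ and $A-B$.

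Granting the closed form, the multiplicativity for $*$ follows by bookkeeping: $v_1*v_2$ expands as a sum of interleave-and-merge terms, and substituting these into the formula for $H(\,\cdot\, z)$ and resumming reproduces the product $H(v_1z)H(v_2z)$ together with a one-dimensional ``corner'' factor. That factor is recognized as $\frac{\sin\pi(A-B)}{\pi}$ through Euler's reflection formula $\Gamma(1+s)\Gamma(1-s)=\pi s/\sin\pi s$, equivalently $\frac{\sin\pi(A-B)}{\pi}=\frac{A-B}{\Gamma(1+A-B)\Gamma(1-A+B)}$; the $\Gamma$-denominator is precisely what repairs the mismatch between the uniform $A$-shift produced by $\frac{1}{1-zA}$ on the left and the $(A-B)$-modification produced by $\frac{1}{1-zB}$ on the right. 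I expect this last step to be the main obstacle: carrying out the closed-form evaluation of $H$ on all of $y\mathfrak{h}z$ cleanly, and verifying that the discrepancy between $*$ and ordinary multiplication collapses to exactly the claimed gamma factor.
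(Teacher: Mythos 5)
Your first step---using (\ref{eq:add_z}) to write $w_{1}\mstuffle w_{2}=-(v_{1}*v_{2})z$ and thereby reduce the proposition to the claim that $v\mapsto\frac{\sin\pi(A-B)}{\pi}H(vz)$ is a homomorphism for the harmonic product $*$ on $y\mathfrak{h}$---is exactly the paper's reduction, and your observation that $\varphi(vz)=\varphi(v)x$ is correct. The gap is the assertion that this $*$-multiplicativity then ``follows by bookkeeping'' from a nested-sum closed form. The words occurring in $H(vz)=Z^{\shuffle}\bigl(\frac{1}{1-zA}\,\varphi(v)\,x\,\frac{1}{1-zB}\bigr)$ carry a right tail $z^{b}=(x+y)^{b}$ and hence contain terminal $y$'s; they are divergent and genuinely shuffle-regularized, so $H(vz)$ is \emph{not} a convergent multiple-Hurwitz sum, and the formal quasi-shuffle computation that makes such sums $*$-multiplicative does not apply to $Z^{\shuffle}$. (Passing between $Z^{\shuffle}$ and the harmonic regularization already costs the $\Gamma_{1}$ factors of the regularization theorem; that is where the factor $\frac{\sin\pi(A-B)}{\pi}=(A-B)\Gamma_{1}(A-B)^{-1}\Gamma_{1}(B-A)^{-1}$ actually comes from, not from a reflection-formula ``corner term''.) More seriously, even after all resummation the identity you need is equivalent to Kawashima's relation, a substantive theorem about $\mathbb{Q}$-linear relations among MZVs: already its simplest instance demands, e.g., $2\zeta(1,1,2)+\zeta(2,2)-\zeta(1,3)=\zeta(2)^{2}$, which no manipulation of the defining series will produce. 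So the step you flag as ``the main obstacle'' is in fact the entire content of the proposition, and your proposal supplies no mechanism for it.

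For comparison, the paper closes this gap with three inputs absent from your sketch: Lemma \ref{lem:another_expr_KawashimaFunc}, which uses the regularization theorem (Corollary \ref{cor:RegThm_another}) to establish the exact closed form $\frac{\sin\pi(A-B)}{\pi}H(w'z)=K(\alpha_{A}(w');A-B)$; Lemma \ref{lem:alpha_T_stuffle}, proved via Hoffman's quasi-symmetric functions, showing that the Hurwitz-shift operator $\alpha_{A}$ is a $*$-homomorphism (incidentally, your sample formula $Z^{\shuffle}\bigl(\frac{1}{1-xA}u\bigr)=\sum_{0<m_{1}<\cdots<m_{d}}\prod_{i}(m_{i}+A)^{-k_{i}}$ for admissible $u$ is correct, but precisely because $\frac{1}{1-xA}u=\frac{1}{1-xA}\shuffle\alpha_{A}(u)$ by Lemma \ref{lem:basic_lemma}, i.e.\ the Hurwitz shift is realized by $\alpha_{A}$, not by prepending letters per se); and, decisively, the Kawashima relation (Theorem \ref{thm:Kawashima}), $K(w_{1}*w_{2};T)=K(w_{1};T)K(w_{2};T)$, which is the true source of the multiplicativity. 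Without an input of that strength your argument cannot be completed.
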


\begin{prop}
\label{prop:H_special}We have
\begin{equation}
-H\left(\frac{yT}{1+yT}z\right)=\frac{\pi}{\sin\pi(A-B)}\left(\frac{\Gamma(1+A)\Gamma(1-T+B)}{\Gamma(1+B)\Gamma(1-T+A)}-1\right).\label{eq:H_special}
\end{equation}
\end{prop}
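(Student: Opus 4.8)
The plan is to repackage the statement via the single functional $\Theta(w):=\tfrac{\sin\pi(A-B)}{\pi}H(wz)$ on $y\mathfrak{h}$, show that $\Theta$ is multiplicative for the harmonic product, and then reduce to a depth-one computation. First I would unwind $H$: since $\varphi$ is an algebra automorphism with $\varphi(x)=z$, $\varphi(y)=-y$, $\varphi\circ\varphi=\mathrm{id}$ (hence $\varphi(z)=x$), applying $\varphi$ to $\tfrac{1}{1-xA}\cdot\tfrac{yT}{1+yT}z\cdot\tfrac{1}{1-xB}$ gives $-\tfrac{1}{1-zA}\cdot\tfrac{yT}{1-yT}x\cdot\tfrac{1}{1-zB}$, so that
\[
H\!\left(\tfrac{yT}{1+yT}z\right)=-\sum_{n\ge 1}T^{n}Z^{\shuffle}\!\left(\tfrac{1}{1-zA}\,y^{n}x\,\tfrac{1}{1-zB}\right),
\]
and \eqref{eq:H_special} becomes equivalent to $\Theta\!\left(\tfrac{yT}{1+yT}\right)=1-\tfrac{\Gamma(1+A)\Gamma(1-T+B)}{\Gamma(1+B)\Gamma(1-T+A)}$ (the two forms being equivalent because $\tfrac{\Gamma(1+A)\Gamma(1-T+B)}{\Gamma(1+B)\Gamma(1-T+A)}-1$ is divisible by $A-B$). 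The key structural point is that $\Theta$ is an algebra homomorphism $(y\mathfrak{h},*)\to(\mathbb{R}[[A,B]],\times)$: the map $w\mapsto -wz$ sends $*$ to $\mstuffle$ by \eqref{eq:add_z}, and $-\tfrac{\sin\pi(A-B)}{\pi}H$ is multiplicative on $y\mathfrak{h}z$ by Proposition~\ref{prop:H_prod}, so $\Theta$ is their composite; I would extend it to $\mathbb{Q}\oplus y\mathfrak{h}$ by $\Theta(1)=1$.

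Next I would pass to depth one using a classical quasi-shuffle identity. Under Hoffman's isomorphism $\mathrm{QSym}\cong(\mathbb{Q}\oplus y\mathfrak{h},*)$ the word $y^{n}$ corresponds to the elementary symmetric function $e_{n}$ and $yx^{k-1}$ to the power sum $p_{k}$, so the classical relation $\sum_{n\ge0}e_{n}t^{n}=\exp(\sum_{k\ge1}\tfrac{(-1)^{k-1}}{k}p_{k}t^{k})$ transports to $\tfrac{1}{1-yt}=\exp_{*}(\sum_{k\ge1}\tfrac{(-1)^{k-1}}{k}yx^{k-1}t^{k})$ (which can also be checked directly). Setting $t=-T$ gives $\tfrac{yT}{1+yT}=1-\exp_{*}(-\sum_{k\ge1}\tfrac{T^{k}}{k}yx^{k-1})$, hence, $\Theta$ being a homomorphism,
\[
\Theta\!\left(\tfrac{yT}{1+yT}\right)=1-\exp\!\left(-\sum_{k\ge1}\tfrac{T^{k}}{k}\,\Theta(yx^{k-1})\right).
\]

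The remaining task — the technical heart — is to evaluate the depth-one values $\Theta(yx^{k-1})=-\tfrac{\sin\pi(A-B)}{\pi}Z^{\shuffle}\bigl(\tfrac{1}{1-zA}\,yz^{k-1}x\,\tfrac{1}{1-zB}\bigr)$ and to establish
\[
\Theta(yx^{k-1})=(-1)^{k-1}\sum_{n\ge2}(-1)^{n-1}\zeta(n)\binom{n-1}{k-1}(A^{n-k}-B^{n-k})\qquad(k\ge1).
\]
I would prove this by expanding $\tfrac{1}{1-zA}$ and $\tfrac{1}{1-zB}$ and evaluating $Z^{\shuffle}$ on the words $z^{a}yz^{k-1}xz^{b}$ directly: since $z=d\log\tfrac{t}{1-t}$, summing the $z^{a}$ (resp.\ $z^{b}$) against $A^{a}$ (resp.\ $B^{b}$) amounts to inserting a factor $(t/(1-t))^{A}$ near $t=0$ (resp.\ $((1-t)/t)^{B}$ near $t=1$) into the shuffle-regularized iterated integral, which turns it into a beta-type integral whose Taylor expansion in $A,B$ produces exactly the stated combination of zeta values. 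I expect the main obstacle to lie precisely here, in correctly handling the shuffle regularization forced by the $z^{a},z^{b}$ while extracting the clean binomial/zeta shape.

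Finally I would assemble. Using $\tfrac{1}{k}\binom{n-1}{k-1}=\tfrac{1}{n}\binom{n}{k}$ and the binomial theorem,
\[
\sum_{k\ge1}\tfrac{T^{k}}{k}\,\Theta(yx^{k-1})=\sum_{n\ge2}\tfrac{(-1)^{n-1}\zeta(n)}{n}\bigl(A^{n}-(A-T)^{n}+(B-T)^{n}-B^{n}\bigr),
\]
and by the expansion $\log\Gamma(1+u)=-\gamma u+\sum_{n\ge2}\tfrac{(-1)^{n}\zeta(n)}{n}u^{n}$ the Euler-constant terms cancel and the right-hand side equals $-\log\tfrac{\Gamma(1+A)\Gamma(1-T+B)}{\Gamma(1+B)\Gamma(1-T+A)}$. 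Exponentiating and substituting into the previous display gives $\Theta\!\left(\tfrac{yT}{1+yT}\right)=1-\tfrac{\Gamma(1+A)\Gamma(1-T+B)}{\Gamma(1+B)\Gamma(1-T+A)}$, which by the first paragraph is \eqref{eq:H_special}.
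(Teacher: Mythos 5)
Your architecture is sound and genuinely different from the paper's through most of its length. Defining $\Theta(w)=\tfrac{\sin\pi(A-B)}{\pi}H(wz)$ and checking via \eqref{eq:add_z} and Proposition~\ref{prop:H_prod} that it is a homomorphism for $*$ on $\mathbb{Q}\oplus y\mathfrak{h}$ is correct; the quasi-shuffle exponential $\tfrac{1}{1-yt}=\exp_*\bigl(\sum_{k\ge1}\tfrac{(-1)^{k-1}}{k}yx^{k-1}t^k\bigr)$ is a valid identity; and your final assembly via $\log\Gamma(1+u)=-\gamma u+\sum_{n\ge2}\tfrac{(-1)^n\zeta(n)}{n}u^n$ checks out. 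Your asserted depth-one formula is in fact exactly what \eqref{eq:H_special} forces under your reduction, and it agrees with direct low-degree computations (e.g.\ the coefficients $-\zeta(2)A$ and $\zeta(3)A^2$ in $\Theta(y)$, and $-2\zeta(3)(A-B)$ in $\Theta(yx)$). By contrast, the paper does not use Proposition~\ref{prop:H_prod} here at all: it shuffle-factors the divergences out of $\tfrac{1}{1-zA}\tfrac{yT}{1-yT}x\tfrac{1}{1-zB}$ in one stroke using Lemma~\ref{lem:basic_lemma} and evaluates the resulting single convergent two-dimensional integral by Gauss's ${}_2F_1(1)$ summation.

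The genuine gap is the step you yourself flag: the evaluation of $\Theta(yx^{k-1})$ is asserted, not proved, and the sketch does not work as written. The words $z^{a}yz^{k-1}xz^{b}$ diverge at both endpoints (a leading $z^{a}$ contains words beginning with $x$, a trailing $z^{b}$ contains words ending with $y$), so $Z^{\shuffle}$ of them is not a convergent iterated integral, and ``inserting $(t/(1-t))^{A}$ near $t=0$'' does not produce one either, since $\int_{0}d\log\tfrac{s}{1-s}$ already diverges; making this rigorous requires peeling off the divergences exactly as the paper does, e.g.
\[
\frac{1}{1-zA}\,y\,\frac{1}{1-zC}\,x\,\frac{1}{1-zB}=\frac{1}{1-xA}\shuffle\frac{1}{1-yB}\shuffle\frac{1}{1-y(A-B)}\,y\,\frac{1}{1-x(C-A)-y(C-B)}\,x\,\frac{1}{1-x(B-A)},
\]
whose image under $Z^{\shuffle}$ is the paper's convergent integral with $(\alpha,\beta,\gamma)=(A-B,\,C-B,\,C-A)$. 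Worse, this specialization satisfies $-\alpha+\beta-\gamma=0$: it sits precisely at the pole that the paper's prefactor $(-\alpha+\beta-\gamma)$ cancels, so extracting your binomial/zeta formula requires a limiting (digamma) version of Gauss's summation rather than a routine beta integral. In short, the reduction to depth one is elegant but does not bypass the analytic core of the paper's argument, and until the identity $\Theta(yx^{k-1})=(-1)^{k-1}\sum_{n\ge2}(-1)^{n-1}\zeta(n)\binom{n-1}{k-1}(A^{n-k}-B^{n-k})$ is actually established, the proof is incomplete.
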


Before proceeding to the proof of the main theorem, let us introduce
the following useful formulas which will sometimes be used in the
remainder of this paper.
\begin{lem}
\label{lem:basic_lemma}Put $\mathcal{R}\coloneqq\sum_{v\in\{x,y\}}\sum_{S\in\{T,A,B\}}\mathbb{Q}Sv$.
For $P,Q\in\mathcal{R}$, $w_{0},\dots,w_{m}\in\mathfrak{h}[[T,A,B]]$,
and $u_{1},\dots,u_{m}\in\{x,y\}$, we have
\[
\frac{1}{1-P}\shuffle w_{0}u_{1}w_{1}\cdots u_{m}w_{m}=\left(\frac{1}{1-P}\shuffle w_{0}\right)u_{1}\left(\frac{1}{1-P}\shuffle w_{1}\right)\cdots u_{m}\left(\frac{1}{1-P}\shuffle w_{m}\right)
\]
and
\[
\frac{1}{1-P}\shuffle\frac{1}{1-Q}=\frac{1}{1-P-Q}.
\]
\end{lem}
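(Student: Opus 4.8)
The plan is to deduce both identities from a single one-letter-at-a-time formula. Write $P=\sum_{v\in\{x,y\}}\gamma_{v}v$ with each $\gamma_{v}$ a $\mathbb{Q}$-linear form in $T,A,B$; then each $\gamma_{v}$ is central in $\mathfrak{h}[[T,A,B]]$ and lies in the ideal $(T,A,B)$, so $\frac{1}{1-P}=\sum_{n\ge 0}P^{n}$ is a well-defined element of $\mathfrak{h}[[T,A,B]]$ and $\shuffle$, being continuous for the $(T,A,B)$-adic topology, may be applied to it term by term. (This is exactly the role played by the prefactors $T,A,B$ in the definition of $\mathcal{R}$.) The heart of the matter is the identity
\[
\frac{1}{1-P}\shuffle(uw)=\frac{1}{1-P}\,u\left(\frac{1}{1-P}\shuffle w\right)
\qquad(u\in\{x,y\},\ w\in\mathfrak{h}[[T,A,B]]).
\]

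To prove it I would start from $\frac{1}{1-P}=1+P\frac{1}{1-P}=1+\sum_{v}\gamma_{v}\,v\frac{1}{1-P}$ and apply the defining recursion of $\shuffle$ to each term $v\frac{1}{1-P}\shuffle uw$. Collecting the resulting terms gives
\[
\frac{1}{1-P}\shuffle(uw)=uw+P\left(\frac{1}{1-P}\shuffle uw\right)+u\left(\Bigl(P\tfrac{1}{1-P}\Bigr)\shuffle w\right),
\]
that is, $(1-P)\bigl(\frac{1}{1-P}\shuffle uw\bigr)=u\bigl(w+(P\tfrac{1}{1-P})\shuffle w\bigr)=u\bigl(\frac{1}{1-P}\shuffle w\bigr)$, and left-multiplying by $\frac{1}{1-P}$ in the concatenation algebra yields the claimed identity. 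Iterating it on a word $v_{1}\cdots v_{N}$ gives $\frac{1}{1-P}\shuffle(v_{1}\cdots v_{N})=\frac{1}{1-P}v_{1}\frac{1}{1-P}v_{2}\cdots v_{N}\frac{1}{1-P}$: one copy of $\frac{1}{1-P}$ at each end and between every two consecutive letters. By multilinearity and continuity it then suffices to prove the first formula of the lemma when each $w_{i}$ is a single word; in that case $w_{0}u_{1}w_{1}\cdots u_{m}w_{m}$ is a single word, the above factorization applies to it, and regrouping the interleaved copies of $\frac{1}{1-P}$ so that, for each $i$, the copies surrounding and separating the letters of $w_{i}$ form $\frac{1}{1-P}\shuffle w_{i}$ (again by the same factorization) reproduces the right-hand side verbatim.

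For the second formula, writing $\frac{1}{1-Q}=1+\sum_{v}\gamma^{Q}_{v}\,v\frac{1}{1-Q}$ and applying the one-letter identity to each summand shows that $S:=\frac{1}{1-P}\shuffle\frac{1}{1-Q}$ satisfies $S=\frac{1}{1-P}+\frac{1}{1-P}\,Q\,S$; rewriting this as $\bigl(1-\tfrac{1}{1-P}Q\bigr)S=\frac{1}{1-P}$ and left-multiplying by $1-P$ turns it into $(1-P-Q)S=1$, so $S=\frac{1}{1-P-Q}$ since $P+Q\in\mathcal{R}$ makes $1-P-Q$ invertible. (Alternatively, one can observe that the coefficient of a word in $\frac{1}{1-P}$ is multiplicative over concatenation of words, so the combinatorial expansion of $\shuffle$ as a sum over interleavings makes the coefficient of any word $v_{1}\cdots v_{n}$ in $\frac{1}{1-P}\shuffle\frac{1}{1-Q}$ equal to $\prod_{i=1}^{n}\bigl(\text{coefficient of }v_{i}\text{ in }P+\text{coefficient of }v_{i}\text{ in }Q\bigr)$, which is its coefficient in $\frac{1}{1-(P+Q)}$.)

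I expect no genuine obstacle here: the only points requiring care are the bookkeeping inside the one-letter identity---applying the $\shuffle$-recursion termwise to the infinite series $P\frac{1}{1-P}$ and keeping the concatenation and shuffle products distinct throughout---and checking that all the infinite sums that appear converge in the $(T,A,B)$-adic topology, which is guaranteed by the shape of the elements of $\mathcal{R}$.
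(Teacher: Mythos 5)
Your proof is correct. Note that the paper itself offers no argument here---it dismisses the lemma as ``almost obvious''---so there is no authorial proof to compare against; your write-up simply supplies the missing details. The one-letter identity $\frac{1}{1-P}\shuffle(uw)=\frac{1}{1-P}\,u\bigl(\frac{1}{1-P}\shuffle w\bigr)$ is exactly the right engine: your derivation of it from $\frac{1}{1-P}=1+P\frac{1}{1-P}$ and the shuffle recursion is sound (the key cancellation $(1-P)\bigl(\frac{1}{1-P}\shuffle uw\bigr)=u\bigl(\frac{1}{1-P}\shuffle w\bigr)$ checks out), the regrouping of the interleaved copies of $\frac{1}{1-P}$ around the distinguished letters $u_{1},\dots,u_{m}$ correctly reproduces the first formula since each $u_{j}$ is flanked by exactly one copy on each side and these are absorbed into the adjacent factors $\frac{1}{1-P}\shuffle w_{i}$, and the fixed-point equation $S=\frac{1}{1-P}+\frac{1}{1-P}QS$ for the second formula is handled correctly, with invertibility of $1-P-Q$ guaranteed because $P+Q\in\mathcal{R}$ lies in the ideal $(T,A,B)$. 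Your attention to $(T,A,B)$-adic convergence, which is the only reason the hypothesis $P,Q\in\mathcal{R}$ is needed at all, is exactly the point the paper leaves implicit.
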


We omit the proof of this lemma since it is almost obvious.
\begin{proof}[Proof of Theorem \ref{thm:MainThm_equiv} assuming Propositions \ref{prop:tau_o_tau},
\ref{prop:H_prod}, and \ref{prop:H_special}]
It is enough to prove (\ref{eq:main_yhx_z}) and (\ref{eq:main_const}).

To prove (\ref{eq:main_yhx_z}), we take $w\in y\mathfrak{h}x$ and
set $u\coloneqq(1-zA)\sigma\big(\frac{1}{1-zA}w\frac{1}{1-zB}\big)(1-zB)$.
Then
\begin{equation}
\varphi(u)\in y\mathfrak{h}z[[T,A,B]]\label{eq:phi_u_in_yhz}
\end{equation}
since
\begin{align*}
u & =\sigma\left(\left(1-zA+yxAT\right)\frac{1}{1-zA}w\frac{1}{1-zB}(1-zB+yxBT)\right)\\
 & =\sigma\left(\left(1+yxT\frac{A}{1-zA}\right)w\left(1+yxT\frac{B}{1-zB}\right)\right)\\
 & \in\sigma\left(y\mathfrak{h}x[[T,A,B]]\right)\subset y\mathfrak{h}x[[T,A,B]].
\end{align*}
Now (\ref{eq:main_yhx_z}) follows from the following calculation:
\begin{align*}
 & Z^{\shuffle}\circ\bar{\sigma}\left(\frac{1}{1-zA}w\frac{1}{1-zB}\right)\\
 & =Z^{\shuffle}\circ\sigma\left(\frac{1}{1-zA}w\frac{1}{1-zB}\right)+Z^{\shuffle}\circ\varphi\left(\frac{yT}{1+yT}z\mstuffle\varphi\left(\sigma\left(\frac{1}{1-zA}w\frac{1}{1-zB}\right)\right)\right)\ \ \ (\text{by Proposition \ref{prop:tau_o_tau}})\\
 & =Z^{\shuffle}\circ\sigma\left(\frac{1}{1-zA}w\frac{1}{1-zB}\right)+Z^{\shuffle}\circ\varphi\left(\frac{yT}{1+yT}z\mstuffle\frac{1}{1-xA}\varphi(u)\frac{1}{1-xB}\right)\\
 & =Z^{\shuffle}\circ\sigma\left(\frac{1}{1-zA}w\frac{1}{1-zB}\right)+H\left(\frac{yT}{1+yT}z\mstuffle\varphi(u)\right)\ \ \ (\text{by (\ref{eq:front_x}) and (\ref{eq:back_x}) })\\
 & =Z^{\shuffle}\circ\sigma\left(\frac{1}{1-zA}w\frac{1}{1-zB}\right)-H\left(\frac{yT}{1+yT}z\right)H(\varphi(u))\frac{\sin\pi(A-B)}{\pi}\ \ \ (\text{by (\ref{eq:phi_u_in_yhz}) and Proposition \ref{prop:H_prod}})\\
 & =Z^{\shuffle}\circ\sigma\left(\frac{1}{1-zA}w\frac{1}{1-zB}\right)\left(1-H\left(\frac{yT}{1+yT}z\right)\frac{\sin\pi(A-B)}{\pi}\right)\\
 & =Z^{\shuffle}\circ\sigma\left(\frac{1}{1-zA}w\frac{1}{1-zB}\right)\cdot\frac{\Gamma(1+A)\Gamma(1-T+B)}{\Gamma(1+B)\Gamma(1-T+A)}\ \ \ (\text{by Proposition \ref{prop:H_special}}),
\end{align*}

We now prove (\ref{eq:main_const}). Since
\begin{align*}
 & \bar{\sigma}\left(\frac{1}{1-xA}\frac{1}{1-yB}\right)\\
 & =\frac{1}{1-\frac{1}{1-yT}xA}\frac{1}{1-yB}\\
 & =\frac{1}{1-yT-xA}(1-yT)\frac{1}{1-yB}\\
 & =\frac{1}{1-yT-xA}+\frac{1}{1-yT-xA}y\frac{B-T}{1-yB}\\
 & =\frac{1}{1-xA}\shuffle\frac{1}{1-yT}+\frac{1}{1-xA}\shuffle\frac{1}{1-yB}\shuffle\frac{1}{1+y(B-T)}y\frac{B-T}{1+xA}\ \ \ (\text{by Lemma \ref{lem:basic_lemma}}),
\end{align*}
we have
\begin{align*}
Z^{\shuffle}\circ\bar{\sigma}\left(\frac{1}{1-xA}\frac{1}{1-yB}\right) & =Z^{\shuffle}\left(1+\frac{1}{1+y(B-T)}y\frac{B-T}{1+xA}\right)\\
 & =1-A(B-T)Z^{\shuffle}\left(\frac{1}{1+y(B-T)}yx\frac{1}{1+xA}\right)\\
 & =1-\sum_{a,b\geq1}\zeta(\overbrace{1,\dots,1}^{a-1},b+1)(T-B)^{a}(-A)^{b}\\
 & =\frac{\Gamma(1+A)\Gamma(1-T+B)}{\Gamma(1+A+B-T)}.
\end{align*}
(see \cite[equation (10)]{BBB} for the last equality). Similarly,
we also have
\[
Z^{\shuffle}\circ\sigma\left(\frac{1}{1-xA}\frac{1}{1-yB}\right)=\frac{\Gamma(1+B)\Gamma(1-T+A)}{\Gamma(1+A+B-T)}.
\]
Thus (\ref{eq:main_const}) is also proved.
\end{proof}

\section{Proof of Proposition \ref{prop:tau_o_tau}}

In this section, we prove Proposition \ref{prop:tau_o_tau}. For $m\in\mathbb{Z}$,
define $\sigma_{m}:\mathfrak{h}\to\mathfrak{h}$ by $\sigma(w)=\sum_{m=0}^{\infty}T^{m}\sigma_{m}(w)$
and $\sigma_{n}(w)=0$ for $n<0$. Furthermore, we put $\bar{\sigma}_{m}\coloneqq\tau\circ\sigma_{m}\circ\tau$.
Let $\diamond:\mathfrak{h}\times\mathfrak{h}\to\mathfrak{h}$ be the
bilinear map defined in \cite{specific}. It satisfies $\varphi(w_{1})*\varphi(w_{2})=\varphi(w_{1}\diamond w_{2})$,
$yw_{1}\diamond yw_{2}=y(yw_{1}\diamond w_{2})-y(w_{1}\diamond xw_{2})$,
and $yw_{1}\diamond xw_{2}=x(yw_{1}\diamond w_{2})+y(w_{1}\diamond xw_{2})$.
For $n\in\mathbb{Z}$ and $w\in\mathfrak{h}$, we put $f_{n}(w)\coloneqq y^{n}\diamond w-(y^{n-1}\diamond w)y$,
where we mean $y^{m}=0$ for $m<0$, i.e., 
\[
f_{n}(w)=\begin{cases}
0 & n<0,\\
w & n=0,\\
y^{n}\diamond w-(y^{n-1}\diamond w)y & n\geq1.
\end{cases}
\]
For $n\geq1$, since
\[
f_{n}(w)=(-1)^{n}\varphi\left(y^{n}*\varphi(w)-(y^{n-1}*\varphi(w))y\right)
\]
and
\begin{align*}
 & \left(y^{n}*\varphi(w)-(y^{n-1}*\varphi(w))y\right)y\\
 & =-y^{n+1}\mstuffle\varphi(w)y+(y^{n}\mstuffle\varphi(w)y)y\ \ \ (\text{by (\ref{eq:add_e1})})\\
 & =-\left(y^{n+1}\mstuffle\varphi(w)\right)y-\left(y^{n}\mstuffle\varphi(w)\right)xy\ \ \ (\text{by (\ref{eq:back_yy})})\\
 & =-\left(y^{n}z\mstuffle\varphi(w)\right)y\ \ \ (\text{by (\ref{eq:back_x})}),
\end{align*}
we have
\begin{equation}
f_{n}(w)=(-1)^{n+1}\varphi\left(y^{n}z\mstuffle\varphi(w)\right).\label{eq:fn_harmonictilde}
\end{equation}

\begin{lem}
\label{lem:fn_recur}For $n\in\mathbb{Z}$ and $w\in\mathfrak{h}$,
we have 
\begin{align*}
f_{n}(xw) & =xf_{n}(w)+yf_{n-1}(xw),\\
f_{n}(yw) & =yf_{n}(w)-yf_{n-1}(xw).
\end{align*}
\end{lem}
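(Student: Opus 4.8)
The plan is to reduce both identities to the two recursive rules for $\diamond$ recalled just above the lemma, namely $yw_1\diamond yw_2=y(yw_1\diamond w_2)-y(w_1\diamond xw_2)$ and $yw_1\diamond xw_2=x(yw_1\diamond w_2)+y(w_1\diamond xw_2)$, together with $1\diamond w=w$. First I would fix throughout the conventions $y^0=1$ and $y^m=0$ (hence $y^m\diamond w=0$) for $m<0$, and observe that with these conventions the three-line definition of $f_n$ collapses to the single formula
\[
f_n(w)=y^n\diamond w-(y^{n-1}\diamond w)y\qquad(n\in\mathbb{Z}),
\]
because the $n=0$ and $n<0$ cases are then recovered automatically.

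Next I would upgrade the two rules for $\diamond$ to the ``one-sided'' recursions
\[
y^m\diamond xv=x(y^m\diamond v)+y(y^{m-1}\diamond xv),\qquad y^m\diamond yv=y(y^m\diamond v)-y(y^{m-1}\diamond xv),
\]
valid for \emph{all} $m\in\mathbb{Z}$ and all $v\in\mathfrak{h}$: for $m\ge 1$ these are exactly the stated rules applied with $w_1=y^{m-1}$ and $w_2=v$; for $m<0$ both sides vanish; and for $m=0$ they reduce, via $1\diamond(\cdot)=(\cdot)$ and $y^{-1}=0$, to the trivial identities $xv=xv+0$ and $yv=yv-0$. This is the only place where the behaviour of $\diamond$ on the empty and ``negative'' words needs any attention.

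With these in hand both identities are a two-line computation. For the first, substitute the $x$-recursion (with $m=n$) into $f_n(xw)=y^n\diamond xw-(y^{n-1}\diamond xw)y$, expand $xf_n(w)+yf_{n-1}(xw)$ from the definition of $f$, and subtract: every surviving term carries a trailing $y$, and the coefficient of that $y$ is $x(y^{n-1}\diamond w)+y(y^{n-2}\diamond xw)-(y^{n-1}\diamond xw)$, which is $0$ by the $x$-recursion with $m=n-1$. The second identity has the same shape: substitute the $y$-recursion into $f_n(yw)=y^n\diamond yw-(y^{n-1}\diamond yw)y$, expand $yf_n(w)-yf_{n-1}(xw)$, and note that the difference equals $\bigl[y(y^{n-1}\diamond w)-y(y^{n-2}\diamond xw)-(y^{n-1}\diamond yw)\bigr]y$, which vanishes by the $y$-recursion with $m=n-1$.

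I do not expect a genuine obstacle here; the content is entirely formal. The one subtlety is the bookkeeping at small and negative $n$, which is precisely why I first isolate the two one-sided recursions and verify them for every integer $m$; once that is done, both recursions for $f_n$ drop out by direct cancellation, with no separate case analysis required.
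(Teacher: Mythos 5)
Your proof is correct and follows the same route as the paper: the paper's proof is the one-line remark that the lemma follows from the two recursions $yw_{1}\diamond xw_{2}=x(yw_{1}\diamond w_{2})+y(w_{1}\diamond xw_{2})$ and $yw_{1}\diamond yw_{2}=y(yw_{1}\diamond w_{2})-y(w_{1}\diamond xw_{2})$, and you have simply carried out that cancellation explicitly, with the correct bookkeeping at $m=0$ and $m<0$.
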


\begin{proof}
The lemma follows from $yw_{1}\diamond xw_{2}=x(yw_{1}\diamond w_{2})+y(w_{1}\diamond xw_{2})$
and $yw_{1}\diamond yw_{2}=y(yw_{1}\diamond w_{2})-y(w_{1}\diamond xw_{2})$.
\end{proof}
\begin{lem}
\label{lem:fj_sum}For $n\geq0$ and $w\in\mathfrak{h}$, we have
\[
\sum_{j=0}^{n}f_{j}(yx^{n-j}w)=yf_{n}(w).
\]
\end{lem}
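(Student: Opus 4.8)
The plan is to prove the identity by induction on $n$, using only the second recursion of Lemma~\ref{lem:fn_recur}, namely $f_n(yw) = yf_n(w) - yf_{n-1}(xw)$, together with the fact that $f_0 = \mathrm{id}$.

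For the base case $n = 0$, both sides equal $yw$. For the inductive step, I would assume the identity holds with $n-1$ in place of $n$ \emph{for every} $w \in \mathfrak{h}$, and then split off the top term $j = n$:
\[
\sum_{j=0}^{n} f_j(yx^{n-j}w) = f_n(yw) + \sum_{j=0}^{n-1} f_j(yx^{n-j}w).
\]
The key observation is that for $0 \le j \le n-1$ one can write $yx^{n-j}w = yx^{(n-1)-j}(xw)$, so the remaining sum is exactly $\sum_{j=0}^{n-1} f_j(yx^{(n-1)-j}(xw))$, which the induction hypothesis, applied with $xw$ in place of $w$, evaluates to $yf_{n-1}(xw)$. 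Substituting $f_n(yw) = yf_n(w) - yf_{n-1}(xw)$ then makes the two copies of $yf_{n-1}(xw)$ cancel, leaving $yf_n(w)$ as desired.

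I do not anticipate any real obstacle here: this is essentially a one-line induction once the reindexing $x^{n-j} = x^{(n-1)-j}\cdot x$ is noticed. The only subtlety is that the induction must be phrased uniformly in $w$, so that the hypothesis can be invoked at the shifted word $xw$; the recursion for $f_n(xw)$ is not needed at all, and equation~(\ref{eq:fn_harmonictilde}) relating $f_n$ to $\mstuffle$ plays no role in this particular lemma.
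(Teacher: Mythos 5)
Your proof is correct and rests on the same ingredient as the paper's, namely the recursion $f_n(yw)=yf_n(w)-yf_{n-1}(xw)$ from Lemma~\ref{lem:fn_recur}: the paper simply applies this recursion to every summand at once and lets the two resulting sums telescope (using $f_{-1}=0$), whereas you package the identical cancellation as an induction on $n$. Your base case, the reindexing $yx^{n-j}w=yx^{(n-1)-j}(xw)$, and the uniform-in-$w$ phrasing of the induction hypothesis are all in order, so there is no gap.
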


\begin{proof}
By Lemma \ref{lem:fn_recur}, we have
\[
\sum_{j=0}^{n}f_{j}(yx^{n-j}w)=y\sum_{j=0}^{n}f_{j}(x^{n-j}w)-y\sum_{j=1}^{n}f_{j-1}(x^{n-j+1}w)=yf_{n}(w).\qedhere
\]
\end{proof}
\begin{lem}
\label{lem:same_recurrence}For $m\geq0$ and $w\in\mathfrak{h}$,
we have
\begin{align*}
\bar{\sigma}_{m}(xw) & =y\bar{\sigma}_{m-1}(xw)+x\bar{\sigma}_{m}(w),\\
\bar{\sigma}_{m}(yw) & =y\bar{\sigma}_{m}(w),\\
\sum_{j=0}^{m}f_{j}(\sigma_{m-j}(xw)) & =y\sum_{j=0}^{m-1}f_{j}(\sigma_{m-1-j}(xw))+x\sum_{j=0}^{m}f_{j}(\sigma_{m-j}(w)),\\
\sum_{j=0}^{m}f_{j}(\sigma_{m-j}(yw)) & =y\sum_{j=0}^{m}f_{j}(\sigma_{m-j}(w)).
\end{align*}
\end{lem}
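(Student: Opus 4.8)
\section*{Proof proposal for Lemma \ref{lem:same_recurrence}}

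The plan is to establish the four recurrences by direct computation, treating separately the pair involving $\bar\sigma_{m}$ and the pair involving the sums $\sum_{j=0}^{m}f_{j}(\sigma_{m-j}(w))$. The preliminary observation for the first pair is that $\bar\sigma=\tau\circ\sigma\circ\tau$ is in fact a \emph{ring homomorphism} $\mathfrak{h}\to\mathfrak{h}[[T]]$, being the composite of a homomorphism with two anti-automorphisms, and that it is determined by $\bar\sigma(x)=\frac{1}{1-yT}x$ and $\bar\sigma(y)=y$; this one reads off from $\sigma(y)=y\frac{1}{1-xT}$ together with $\tau(x)=y$, $\tau(y)=x$ and the fact that $\tau$ reverses products. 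The inputs for the second pair are the recurrence $f_{n}(xw)=xf_{n}(w)+yf_{n-1}(xw)$ (Lemma \ref{lem:fn_recur}), the collapsing identity $\sum_{j=0}^{n}f_{j}(yx^{n-j}w)=yf_{n}(w)$ (Lemma \ref{lem:fj_sum}), and the elementary identities $\sigma_{m}(xw)=x\sigma_{m}(w)$ and $\sigma_{m}(yw)=y\sum_{k=0}^{m}x^{k}\sigma_{m-k}(w)$ coming from $\sigma$ being a ring homomorphism with $\sigma(x)=x$ and $\sigma(y)=y\frac{1}{1-xT}$.

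For $\bar\sigma_{m}(yw)=y\bar\sigma_{m}(w)$ there is nothing to do beyond applying $\bar\sigma$ to $yw$ and extracting the coefficient of $T^{m}$. For the first recurrence I would write $\frac{1}{1-yT}=1+yT\cdot\frac{1}{1-yT}$, so that
\[
\bar\sigma(xw)=\frac{1}{1-yT}x\,\bar\sigma(w)=x\bar\sigma(w)+yT\cdot\frac{1}{1-yT}x\,\bar\sigma(w)=x\bar\sigma(w)+yT\,\bar\sigma(xw),
\]
and then compare coefficients of $T^{m}$ to get $\bar\sigma_{m}(xw)=x\bar\sigma_{m}(w)+y\bar\sigma_{m-1}(xw)$.

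For the third recurrence I would substitute $\sigma_{m-j}(xw)=x\sigma_{m-j}(w)$ and apply Lemma \ref{lem:fn_recur} to each summand, $f_{j}(x\sigma_{m-j}(w))=xf_{j}(\sigma_{m-j}(w))+yf_{j-1}(x\sigma_{m-j}(w))$; summing over $j=0,\dots,m$, the first terms assemble to $x\sum_{j=0}^{m}f_{j}(\sigma_{m-j}(w))$, while the second terms (the $j=0$ one vanishing because $f_{-1}=0$) reindex by $j\mapsto j+1$ to $y\sum_{j=0}^{m-1}f_{j}(x\sigma_{m-1-j}(w))=y\sum_{j=0}^{m-1}f_{j}(\sigma_{m-1-j}(xw))$. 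For the fourth recurrence I would substitute $\sigma_{m-j}(yw)=y\sum_{k=0}^{m-j}x^{k}\sigma_{m-j-k}(w)$, so that the left-hand side becomes the triple sum $\sum f_{j}(yx^{k}\sigma_{i}(w))$ over nonnegative integers with $i+j+k=m$; grouping by the value of $i$ and invoking Lemma \ref{lem:fj_sum} with $n=m-i$ and $w$ replaced by $\sigma_{i}(w)$ collapses each inner sum $\sum_{j+k=m-i}f_{j}(yx^{k}\sigma_{i}(w))$ to $yf_{m-i}(\sigma_{i}(w))$, so the total equals $y\sum_{i=0}^{m}f_{m-i}(\sigma_{i}(w))=y\sum_{j=0}^{m}f_{j}(\sigma_{m-j}(w))$.

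No step here is a genuine obstacle; the argument is a chain of formal rewrites. What needs the most care is the identification of $\bar\sigma$ as an honest (not anti-) homomorphism with the stated values, and the index bookkeeping in the third and fourth recurrences---keeping track that $f_{-1}=0$ and $\sigma_{n}=0$ for $n<0$ so the shifted sums line up exactly, and recognizing the $yw$ computation as a sum over compositions $i+j+k=m$ so that Lemma \ref{lem:fj_sum} can be applied fibrewise over $i$.
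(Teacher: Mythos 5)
Your proposal is correct and follows essentially the same route as the paper: the paper dismisses the two $\bar{\sigma}_{m}$ identities as obvious (your homomorphism argument via $\bar{\sigma}(x)=\frac{1}{1-yT}x$, $\bar{\sigma}(y)=y$ is exactly the reason), and proves the last two by the identical substitutions $\sigma_{m-j}(xw)=x\sigma_{m-j}(w)$ and $\sigma_{m-j}(yw)=\sum_{i}yx^{i}\sigma_{m-j-i}(w)$ followed by Lemma \ref{lem:fn_recur}, reindexing, and Lemma \ref{lem:fj_sum}. Your index bookkeeping (grouping by $i+j=n$, using $f_{-1}=0$) matches the paper's computation step for step.
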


\begin{proof}
The first two identities are obvious. The last two identities follow
from the following calculation:
\begin{align*}
\sum_{j=0}^{m}f_{j}(\sigma_{m-j}(xw)) & =\sum_{j=0}^{m}f_{j}(x\sigma_{m-j}(w))\\
 & =x\sum_{j=0}^{m}f_{j}(\sigma_{m-j}(w))+y\sum_{j=1}^{m}f_{j-1}(x\sigma_{m-j}(w))\ \ \ (\text{by Lemma \ref{lem:fn_recur}}),\\
\sum_{j=0}^{m}f_{j}(\sigma_{m-j}(yw)) & =\sum_{j=0}^{m}\sum_{i=0}^{m-j}f_{j}(yx^{i}\sigma_{m-j-i}(w))\\
 & =\sum_{n=0}^{m}\sum_{j=0}^{n}f_{j}(yx^{n-j}\sigma_{m-n}(w))\ \ \ \ \ (i+j=n)\\
 & =\sum_{n=0}^{m}yf_{n}(\sigma_{m-n}(w))\ \ \ \ \ (\text{by Lemma \ref{lem:fj_sum}}).\qedhere
\end{align*}
\end{proof}
\begin{lem}
\label{lem:sigmabar_fj_sigma}For $m\geq0$ and $w\in\mathfrak{h}$,
we have
\[
\bar{\sigma}_{m}(w)=\sum_{j=0}^{m}f_{j}(\sigma_{m-j}(w)).
\]
\end{lem}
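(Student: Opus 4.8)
The plan is to prove Lemma~\ref{lem:sigmabar_fj_sigma} by induction on the word length of $w$, using the recurrences assembled in Lemma~\ref{lem:same_recurrence} as the inductive engine. Set $g_m(w)\coloneqq\sum_{j=0}^{m}f_j(\sigma_{m-j}(w))$; the goal is to show $\bar\sigma_m(w)=g_m(w)$ for all $m\geq0$ and all $w\in\mathfrak h$. First I would dispose of the base case: for $w=1$ (the empty word) we have $\sigma_0(1)=1$, $\sigma_{n}(1)=0$ for $n\geq1$, and $f_0={\rm id}$, $f_j(1)=0$ for $j\geq1$ (since $y^j\diamond 1-(y^{j-1}\diamond 1)y=y^j-y^j=0$), so $g_m(1)=\delta_{m,0}=\bar\sigma_m(1)$; the same holds trivially because $\bar\sigma(1)=1$.

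For the inductive step, write $w=xw'$ or $w=yw'$ with $w'$ shorter. By Lemma~\ref{lem:same_recurrence}, both families $(\bar\sigma_m)_m$ and $(g_m)_m$ satisfy \emph{exactly the same} pair of recurrences in each of these two cases:
\[
\bar\sigma_m(xw')=y\bar\sigma_{m-1}(xw')+x\bar\sigma_m(w'),\qquad \bar\sigma_m(yw')=y\bar\sigma_m(w'),
\]
and likewise with $g$ in place of $\bar\sigma$. In the case $w=yw'$ this is immediate: $g_m(yw')=y\,g_m(w')=y\,\bar\sigma_m(w')=\bar\sigma_m(yw')$ by the induction hypothesis applied to $w'$. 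In the case $w=xw'$ one additionally inducts on $m$: assuming $g_{m-1}(xw')=\bar\sigma_{m-1}(xw')$ and $g_m(w')=\bar\sigma_m(w')$, the shared recurrence gives $g_m(xw')=y\bar\sigma_{m-1}(xw')+x\bar\sigma_m(w')=\bar\sigma_m(xw')$; the $m=0$ instance is covered since $\bar\sigma_{-1}=g_{-1}=0$ and the recurrence degenerates to $g_0(xw')=x\,g_0(w')=x\,\bar\sigma_0(w')$, which equals $\bar\sigma_0(xw')=\bar\sigma(xw')|_{T^0}=xw'=x\,\bar\sigma_0(w')$. Organizing this as a single induction on length with an inner induction on $m$ for the $x$-case (or, more slickly, a double induction on $(\text{length},m)$ lexicographically) makes everything go through.

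The only genuine content is Lemma~\ref{lem:same_recurrence}, which is already established, so there is essentially no obstacle here; the one point demanding a little care is making sure the two recurrence systems are literally identical and that the degenerate cases ($m=0$, or $n<0$ in $f_n$, $\sigma_n$) match up, since the whole argument is "two sequences satisfying the same recurrence with the same initial data coincide." I would therefore state the proof compactly: the four identities of Lemma~\ref{lem:same_recurrence}, together with $\bar\sigma_0(w)=g_0(w)=w$ for all $w$ (from $\sigma_0={\rm id}$, $f_0={\rm id}$, $f_{j\geq1}(1)=0$ and the multiplicativity already recorded), determine both $(\bar\sigma_m)$ and $(g_m)$ uniquely by induction on the length of $w$ (with an auxiliary induction on $m$ in the $x$-case), hence $\bar\sigma_m=g_m$ identically.
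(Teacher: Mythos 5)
Your proposal is correct and follows essentially the same route as the paper: a double induction on the length of $w$ and on $m$, with the base cases $w=1$ and $m=0$ checked directly and the inductive step supplied by the shared recurrences of Lemma~\ref{lem:same_recurrence}. You merely spell out the base-case verifications (e.g.\ $f_j(1)=0$ for $j\geq1$ and $\bar\sigma_0=\mathrm{id}$) that the paper dismisses as obvious.
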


\begin{proof}
Let us prove this lemma by induction on $m$ and on the length of
$w$. The case $m=0$ or $w=1$ is obvious. Furthermore, both sides
satisfy the same recurrence formula by Lemma \ref{lem:same_recurrence}.
Thus the lemma is proved.
\end{proof}
Now, Proposition \ref{prop:tau_o_tau} follows from the following
calculation:
\begin{align*}
\bar{\sigma}(w) & =\sum_{m=0}^{\infty}T^{m}\cdot\bar{\sigma}_{m}(w)\\
 & =\sum_{m=0}^{\infty}T^{m}\sum_{j=0}^{m}f_{j}(\sigma_{m-j}(w))\ \ \ (\text{by Lemma \ref{lem:sigmabar_fj_sigma}})\\
 & =\sum_{j=0}^{\infty}\sum_{n=0}^{\infty}T^{n+j}f_{j}(\sigma_{n}(w))\ \ \ (n=m-j)\\
 & =\sum_{j=0}^{\infty}T^{j}f_{j}(\sigma(w))\\
 & =\sigma(w)-\sum_{j=1}^{\infty}(-T)^{j}\varphi\left(y^{j}z\mstuffle\varphi(\sigma(w))\right)\ \ \ (\text{by (\ref{eq:fn_harmonictilde})})\\
 & =\sigma(w)+\varphi\left(\frac{yT}{1+yT}z\mstuffle\varphi(\sigma(w))\right).
\end{align*}

\section{Proof of Proposition \ref{prop:H_prod}}

In this section, we prove Proposition \ref{prop:H_prod} by using
the Kawashima relation. For $n\geq1$, put $z_{n}\coloneqq yx^{n-1}$.
Define a bilinear map $\circledast:\mathfrak{h}y\mathfrak{h}\times\mathfrak{h}y\mathfrak{h}\to\mathfrak{h}y\mathfrak{h}$
by
\[
w_{1}z_{m}\circledast w_{2}z_{n}=(w_{1}*w_{2})z_{m+n}.
\]
For $w\in y\mathfrak{h}$ and indeterminate $T$, define $K(w;T)\in\mathbb{R}[[T]]$
by
\[
K(w;T)\coloneqq\sum_{m=1}^{\infty}Z(y^{m}\circledast\varphi(w))T^{m}.
\]
Then the Kawashima relation can be stated as follows:
\begin{thm}[Kawashima relation, \cite{Kawashima}]
\label{thm:Kawashima}For $w_{1},w_{2}\in y\mathfrak{h}$, we have
\[
K(w_{1}*w_{2};T)=K(w_{1};T)K(w_{2};T).
\]
\end{thm}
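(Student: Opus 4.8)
The plan is to treat this as a reformulation of Kawashima's relation \cite{Kawashima} and to supply the dictionary between the two formulations. In \cite{Kawashima} one works with the same harmonic product $*$, the same automorphism $\varphi$ (with $\varphi(x)=x+y$, $\varphi(y)=-y$), and a generating function assembled from the values $Z\bigl(y^{m}\circledast\varphi(w)\bigr)$; the product $\circledast$ is precisely the operation that glues the two final blocks $z_{a},z_{b}$ of its arguments into $z_{a+b}$, which is exactly how the harmonic product of indices behaves in the last slot. The first step is to record that the statement makes sense: for $w\in y\mathfrak{h}$ every word occurring in $\varphi(w)$ begins with $y$, and writing $y^{m}=y^{m-1}z_{1}$ and factoring $\varphi(w)$ at its last $y$ shows $y^{m}\circledast\varphi(w)\in y\mathfrak{h}x\subset\mathfrak{h}^{0}$, so that $Z$ is defined on it. The second step is to match the two generating functions coefficient by coefficient, which comes down to aligning the summation range (from $m=1$) and the index shift hidden in $y^{m}=y^{m-1}z_{1}$ with Kawashima's conventions, and checking that $*$ on all of $\mathfrak{h}$ restricts compatibly to $\mathbb{Q}\oplus y\mathfrak{h}$.

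For a self-contained argument one would instead run Kawashima's original proof, whose engine is the interplay of two multiplications on one-variable multiple polylogarithms. For $w\in\mathfrak{h}$ and $t\in[0,1)$ let $\mathrm{Li}_{w}(t)$ be the iterated integral of the word $w$ from $0$ to $t$ with the forms $dt/t$ and $dt/(1-t)$. These functions multiply by the shuffle product, $\mathrm{Li}_{w_{1}}(t)\,\mathrm{Li}_{w_{2}}(t)=\mathrm{Li}_{w_{1}\shuffle w_{2}}(t)$, and they also satisfy a harmonic-type product rule with respect to $t$ coming from their series expansions; the automorphism $\varphi$ bridges the two, since on the integrand it sends $dt/(1-t)\mapsto-dt/(1-t)$ and $dt/t\mapsto dt/\bigl(t(1-t)\bigr)$, turning the integral-side structure into the harmonic one. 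One realizes $K(w;T)$ as a limit $t\to1^{-}$, in the Abel sense, of a suitable one-variable series built from $\mathrm{Li}_{\varphi(w)}(t)$, and then $K(w_{1};T)K(w_{2};T)=K(w_{1}*w_{2};T)$ is extracted from the two product rules together with the identity $\varphi(w_{1})*\varphi(w_{2})=\varphi(w_{1}\diamond w_{2})$ and the block-gluing nature of $\circledast$.

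The main obstacle depends on the route. In the first route it is purely a matter of normalization: ensuring that the shift built into $y^{m}=y^{m-1}z_{1}$, the starting value of the summation index, and Kawashima's own conventions are lined up so that the two generating functions agree term by term. In the second route the obstacle is analytic: the series defining $\mathrm{Li}_{\varphi(w)}(t)$ converges only for $|t|<1$, so recovering $Z\bigl(y^{m}\circledast\varphi(w)\bigr)$ requires an Abel-type limit with uniform control as $t\to1^{-}$, which is the technical heart of \cite{Kawashima}. For this reason one would most naturally present the theorem by invoking \cite{Kawashima} and supplying only the notational dictionary described above.
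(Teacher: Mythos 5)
The paper offers no proof of this theorem: it is quoted directly from \cite{Kawashima}, exactly as you ultimately propose to do. Your preliminary check that $y^{m}\circledast\varphi(w)$ lands in $y\mathfrak{h}x\subset\mathfrak{h}^{0}$ (so that $Z$ applies and $K(w;T)$ is well defined) is correct, and the rest of your outline is a reasonable gloss on Kawashima's original argument, so your treatment matches the paper's.
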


We put $\Gamma_{1}(T)\coloneqq\exp(\sum_{n=2}^{\infty}\frac{\zeta(n)}{n}(-T)^{n})\in\mathbb{R}[[T]]$
and $\mathfrak{h}^{1}=\mathbb{Q}\oplus y\mathfrak{h}$. Then $\mathfrak{h}^{1}$
is closed under the harmonic product $*$. Define $Z^{*}:\mathfrak{h}^{1}\to\mathbb{R}$
by the properties $Z^{*}(w_{1}*w_{2})=Z^{*}(w_{1})Z^{*}(w_{2})$,
$Z^{*}(w)=Z(w)$, and $Z^{*}(y)=0$, where $w_{1},w_{2}\in\mathfrak{h}^{1}$,
and $w\in\mathfrak{h}^{0}$. Then the regularization theorem can be
stated as follows:
\begin{thm}[Regularization theorem, \cite{IKZ}]
For $w\in\mathfrak{h}^{0}$, we have
\[
Z^{\shuffle}\left(w\frac{1}{1-yT}\right)=\Gamma_{1}(T)Z^{*}\left(w\frac{1}{1-yT}\right).
\]
\end{thm}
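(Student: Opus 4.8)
This is the regularization theorem of \cite{IKZ}; my plan is to reproduce that argument in the generating-series language used here. The structural input is that both the shuffle product $\shuffle$ and the harmonic product $*$ make $\mathfrak{h}^{1}=\mathbb{Q}\oplus y\mathfrak{h}$ a polynomial algebra over $\mathfrak{h}^{0}$ on the single generator $y$ (for $\shuffle$ this is the instance $A=\{y\}$, $B=\{x\}$ of the factorization lemma quoted above; for $*$ it is the corresponding fact for quasi-shuffle algebras). In particular $Z^{\shuffle}$ restricted to $\mathfrak{h}^{1}$, resp.\ $Z^{*}$, is the unique homomorphism for $\shuffle$, resp.\ $*$, from $\mathfrak{h}^{1}$ to $\mathbb{R}$ that restricts to $Z$ on $\mathfrak{h}^{0}$ and kills $y$; recall also that $Z$ is a homomorphism for both products on $\mathfrak{h}^{0}$.

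The case $w=1$ already forces the factor $\Gamma_{1}$, so I would treat it first. Since $\frac{1}{1-yT}=\sum_{n}y^{n}T^{n}=\sum_{n}\frac{(yT)^{\shuffle n}}{n!}$ and $Z^{\shuffle}(y)=0$, we get $Z^{\shuffle}(\frac{1}{1-yT})=1$. On the harmonic side, the quasi-shuffle analogue of Newton's identity gives $\frac{1}{1-yT}=\exp_{*}\bigl(\sum_{k\geq1}\frac{(-1)^{k-1}}{k}z_{k}T^{k}\bigr)$, where $\exp_{*}(v):=\sum_{n\geq0}v^{*n}/n!$; applying the $*$-homomorphism $Z^{*}$, using $Z^{*}(z_{1})=Z^{*}(y)=0$ and $Z^{*}(z_{k})=\zeta(k)$ for $k\geq2$, and comparing with the definition of $\Gamma_{1}$, we get $Z^{*}(\frac{1}{1-yT})=\exp\bigl(\sum_{k\geq2}\frac{(-1)^{k-1}}{k}\zeta(k)T^{k}\bigr)=\Gamma_{1}(T)^{-1}$. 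Hence $Z^{\shuffle}(\frac{1}{1-yT})=1=\Gamma_{1}(T)Z^{*}(\frac{1}{1-yT})$.

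For a general $w\in\mathfrak{h}^{0}$ (which is all that is needed, since $\mathfrak{h}^{0}=\mathbb{Q}\oplus y\mathfrak{h}x$) the idea is to trade the concatenation by $\frac{1}{1-yT}$ for a product inside each algebra. On the shuffle side the factorization lemma gives at once $w\frac{1}{1-yT}=W'(T)\shuffle\frac{1}{1-yT}$ with $W'(T):=\sum_{m\geq0}{\rm reg}_{\shuffle}(wy^{m})T^{m}\in\mathfrak{h}^{0}[[T]]$, so $Z^{\shuffle}(w\frac{1}{1-yT})=Z(W'(T))$. On the harmonic side the corresponding statement --- that $w\frac{1}{1-yT}=W(T)*\frac{1}{1-yT}$ for some $W(T)\in\mathfrak{h}^{0}[[T]]$ --- is equivalent, writing $wy^{n}=\sum_{b\geq0}(wy^{n})_{b}*\frac{y^{*b}}{b!}$ with $(wy^{n})_{b}\in\mathfrak{h}^{0}$ in the $*$-polynomial expansion, to the covariance $(wy^{n})_{b}=(wy^{n-b})_{0}$ for $b\leq n$. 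Granting this, $Z^{*}(w\frac{1}{1-yT})=Z(W(T))\Gamma_{1}(T)^{-1}$ by the $w=1$ computation, and the theorem reduces to $Z(W'(T))=Z(W(T))$, i.e.\ to the statement that the shuffle and harmonic regularizations have the same $Z$-value; this is the heart of \cite{IKZ}, proved there by matching the two polynomial-algebra structures on $\mathfrak{h}^{1}$ (again through the Newton identity), or equivalently by induction on weight fed by the double shuffle relations $Z(u\shuffle v-u*v)=0$ for $u,v\in\mathfrak{h}^{0}$.

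The two points where I expect real work are: the harmonic-side covariance $(wy^{n})_{b}=(wy^{n-b})_{0}$, which I would prove via the recursion $vy=v*y-\delta(v)$ --- here $\delta(v)=v*y-vy$ is a sum of index-bumping terms (essentially the degree-one part of $\sigma$) and of terms inserting $y$ into $v$ away from its end --- together with induction on the weight and on the number of trailing $1$'s; and the comparison $Z(W'(T))=Z(W(T))$ itself, which is the true content of the regularization theorem and where the interplay between the two products genuinely enters.
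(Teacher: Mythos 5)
The paper does not prove this statement at all: it is quoted verbatim from Ihara--Kaneko--Zagier with a citation, so there is no internal argument to compare yours against. Judged on its own terms, your sketch handles the bookkeeping correctly --- the computation $Z^{\shuffle}(\tfrac{1}{1-yT})=1$, the Newton-identity evaluation $Z^{*}(\tfrac{1}{1-yT})=\Gamma_{1}(T)^{-1}$, the shuffle-side factorization $w\tfrac{1}{1-yT}=W'(T)\shuffle\tfrac{1}{1-yT}$, and the harmonic-side covariance $(wy^{n})_{b}=(wy^{n-b})_{0}$ are all correct and provable along the lines you indicate --- but it collapses at the step you yourself flag as ``the true content'': the identity $Z(W'(T))=Z(W(T))$. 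Neither of the two justifications you offer for it works. ``Matching the two polynomial-algebra structures on $\mathfrak{h}^{1}$'' only produces the two regularization maps $\mathrm{reg}_{\shuffle},\mathrm{reg}_{*}:\mathfrak{h}^{1}\to\mathfrak{h}^{0}$; it says nothing about why their $Z$-values should agree (after the $\Gamma_{1}$-twist), and indeed $Z\circ\mathrm{reg}_{\shuffle}\neq Z\circ\mathrm{reg}_{*}$ without that twist. And the finite double shuffle relations $Z(u\shuffle v-u*v)=0$ for $u,v\in\mathfrak{h}^{0}$ are strictly weaker than what you need: already the coefficient of $T^{1}$ in your target identity is Hoffman's relation $Z(y*w-y\shuffle w)=0$ for $w\in\mathfrak{h}^{0}$, which is the prototype of an \emph{extended} double shuffle relation precisely because it is not known to follow from the finite ones. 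An induction ``fed by'' the finite relations therefore cannot close.

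The actual proof in \cite{IKZ} is analytic, not algebraic, and this is also where the factor $\Gamma_{1}(T)$ genuinely comes from: one shows that the truncated sums $\zeta_{M}(\mathbf{k})$ admit an asymptotic expansion as a polynomial in $\log M+\gamma$ whose coefficients are the $*$-regularized values, that the multiple polylogarithm $\mathrm{Li}_{\mathbf{k}}(z)$ admits an expansion in $-\log(1-z)$ as $z\to1^{-}$ whose coefficients are the $\shuffle$-regularized values, and then compares the two through $\sum_{M}\zeta_{M}(\mathbf{k})z^{M}=\mathrm{Li}_{\mathbf{k}}(z)/(1-z)$ together with the asymptotics of $\sum_{M}\tfrac{(\log M+\gamma)^{k}}{k!}z^{M}$, whose mismatch with $\tfrac{1}{1-z}\cdot\tfrac{(-\log(1-z))^{k}}{k!}$ is exactly the $\Gamma$-twist. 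Some input of this transcendental kind is unavoidable; a purely formal argument inside $(\mathfrak{h},\shuffle,*)$ of the sort you outline cannot produce the theorem. So the proposal should either import the comparison $Z(W'(T))=Z(W(T))$ as the cited result (in which case the whole exercise is circular) or supply this analytic step.
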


\begin{cor}
\label{cor:RegThm_another}Let $L\in\mathbb{Q}T$. If $w\in\mathfrak{h}^{0}[[T]](1-yL)^{-1}$,
then
\[
Z^{\shuffle}(w)=\Gamma_{1}(L)Z^{*}(w).
\]
\end{cor}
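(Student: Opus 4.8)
The plan is to deduce this from the regularization theorem above by two reductions: first to the case $L=T$, and inside that to the case where the ``numerator'' lies in $\mathfrak{h}^{0}$ rather than in $\mathfrak{h}^{0}[[T]]$.

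First I would settle the case $L=T$. Write $w=v\cdot\frac{1}{1-yT}$ with $v\in\mathfrak{h}^{0}[[T]]$, and expand $v=\sum_{m\geq0}v_{m}T^{m}$ with $v_{m}\in\mathfrak{h}^{0}$. Then $v_{m}\frac{1}{1-yT}\in\mathfrak{h}^{1}[[T]]$ (its $T^{0}$-coefficient is $v_{m}\in\mathfrak{h}^{0}$ and every higher coefficient lies in $y\mathfrak{h}$), so $w\in\mathfrak{h}^{1}[[T]]$ and $Z^{*}(w)$ is defined. Since $Z^{\shuffle}$ and $Z^{*}$ are the coefficient-wise extensions of $\mathbb{Q}$-linear maps, they treat powers of $T$ as scalars; therefore
\[
Z^{\shuffle}(w)=\sum_{m\geq0}T^{m}\,Z^{\shuffle}\left(v_{m}\frac{1}{1-yT}\right),\qquad Z^{*}(w)=\sum_{m\geq0}T^{m}\,Z^{*}\left(v_{m}\frac{1}{1-yT}\right),
\]
where both sums converge $T$-adically. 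Applying the regularization theorem to each $v_{m}\in\mathfrak{h}^{0}$ gives $Z^{\shuffle}(v_{m}\frac{1}{1-yT})=\Gamma_{1}(T)\,Z^{*}(v_{m}\frac{1}{1-yT})$, and summing against $T^{m}$ yields $Z^{\shuffle}(w)=\Gamma_{1}(T)\,Z^{*}(w)$.

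Next I would reduce the general case to $L=T$. Write $L=cT$ with $c\in\mathbb{Q}$. If $c=0$ then $w\in\mathfrak{h}^{0}[[T]]$, $\Gamma_{1}(0)=1$, and $Z^{\shuffle}$ and $Z^{*}$ both restrict to $Z$ on $\mathfrak{h}^{0}$, hence agree on $\mathfrak{h}^{0}[[T]]$ by linearity, so the claim holds. If $c\neq0$, let $\psi_{c}$ be the substitution $T\mapsto cT$, a $\mathbb{Q}$-algebra automorphism of $\mathfrak{h}[[T]]$ with inverse $\psi_{1/c}$, and likewise of $\mathbb{R}[[T]]$. Because $\psi_{c}$ only rescales powers of $T$ while $Z^{\shuffle}$ and $Z^{*}$ treat them as scalars, $\psi_{c}$ commutes with $Z^{\shuffle}$ and with $Z^{*}$; moreover $\psi_{c}$ preserves $\mathfrak{h}^{0}[[T]]$ setwise, sends $\frac{1}{1-yT}$ to $\frac{1}{1-yL}$, and (being $T$-adically continuous) sends $\Gamma_{1}(T)=\exp\left(\sum_{n\geq2}\frac{\zeta(n)}{n}(-T)^{n}\right)$ to $\Gamma_{1}(L)$. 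Hence $\psi_{c}$ maps $\mathfrak{h}^{0}[[T]]\frac{1}{1-yT}$ bijectively onto $\mathfrak{h}^{0}[[T]]\frac{1}{1-yL}$; writing $w=\psi_{c}(w')$ with $w'\in\mathfrak{h}^{0}[[T]]\frac{1}{1-yT}$ and applying $\psi_{c}$ to the identity $Z^{\shuffle}(w')=\Gamma_{1}(T)\,Z^{*}(w')$ already established gives $Z^{\shuffle}(w)=\Gamma_{1}(L)\,Z^{*}(w)$, as required.

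I do not expect a serious obstacle: the entire content sits in the regularization theorem, and the two reductions are only bookkeeping with power series in $T$. The single point deserving a line of care is the verification that $\psi_{c}$ commutes with the coefficient-wise extensions of $Z^{\shuffle}$ and $Z^{*}$, and that the interchange of $Z^{\shuffle}$, $Z^{*}$ with the $T$-adically convergent sums $\sum_{m}v_{m}T^{m}\frac{1}{1-yT}$ is legitimate; both are immediate from the definitions.
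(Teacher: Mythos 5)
Your proof is correct: the coefficient-wise reduction to the Regularization theorem for each $v_{m}\in\mathfrak{h}^{0}$, followed by the substitution $T\mapsto cT$ (which commutes with the coefficient-wise extensions of $Z^{\shuffle}$ and $Z^{*}$ and carries $\Gamma_{1}(T)$ to $\Gamma_{1}(L)$), is exactly the routine bookkeeping the corollary requires. The paper states this corollary without proof, and your argument supplies the evidently intended derivation with no gaps.
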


\begin{lem}
\label{lem:another_expr_KawashimaFunc}For $u\in y\mathfrak{h}$,
we have 
\[
K(u;T)=\frac{\sin(\pi T)}{\pi}Z^{\shuffle}\left(\varphi(u)x\frac{1}{1+zT}\right).
\]
\end{lem}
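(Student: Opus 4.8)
The plan is to express $K(u;T)$ by unravelling the definition $K(u;T)=\sum_{m\geq1}Z(y^m\circledast\varphi(u))T^m$ and re-encoding the operation $y^m\circledast(-)$ in terms of the symmetric harmonic product $\mstuffle$, so that the Kawashima-style combinatorics becomes the $\mstuffle$-combinatorics already developed in Section 3. Concretely, write $u=vy\in y\mathfrak h$ (so $\varphi(u)=\varphi(v)z$ modulo the subtlety at $y$) and recall that $w_1 z\mstuffle w_2 z=-(w_1*w_2)z$ by \eqref{eq:add_z}. The first step is therefore to identify $\sum_{m\geq1}(y^{m-1}\circledast -)(\text{something})\,T^m$ with a generating series of the form $\frac{1}{1+zT}$-type geometric sums acting via $\mstuffle$ on $\varphi(u)x$: the point of the extra $x$ on the right and the $\frac{1}{1+zT}$ is exactly to produce, after applying $\varphi$ and reading off coefficients, the harmonic products $y^m*\varphi(u)$ bundled into $z_{m+\bullet}$'s that define $\circledast$.

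Second, I would pass from the harmonic/$\mstuffle$-side to the shuffle-side using the regularization theorem. The series $Z^{\shuffle}\!\left(\varphi(u)x\frac{1}{1+zT}\right)$ lies in a space to which Corollary~\ref{cor:RegThm_another} applies (with $L=-T$, noting $z=x+y$ and that $\varphi(u)x$ ends in $x$ so $\varphi(u)x\frac{1}{1+zT}$ sits in the appropriate $\mathfrak h^0[[T]](1-yL)^{-1}$-type module after using the shuffle Lemma~\ref{lem:basic_lemma} to split off the $\frac{1}{1+xT}$ part), giving $Z^{\shuffle}(\cdots)=\Gamma_1(-T)Z^*(\cdots)$ — and $\Gamma_1(-T)$ combines with a second $\Gamma_1$ coming from the other side. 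Here one should expect the factor $\frac{\sin(\pi T)}{\pi}$ to emerge precisely from $\Gamma_1(T)\Gamma_1(-T)=\frac{\sin\pi T}{\pi T}$ together with a compensating $T$; the reflection formula for $\Gamma$ is the source of the sine, exactly as in Proposition~\ref{prop:H_special}.

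Third, I would match the $Z^*$-side with $\sum_m Z(y^m\circledast\varphi(u))T^m$ directly: since $Z^*$ is the harmonic regularization and $y^m\circledast\varphi(u)$ is built from $*$, the generating function $\sum_m Z^*\big(\text{(geometric series in }z\text{)}\big)$ literally reproduces $K(u;T)$ once $Z^*(y)=0$ is used to kill the boundary term, and the stuffle-to-$\circledast$ bookkeeping is the content of the identity $w_1 z\mstuffle w_2 z=-(w_1*w_2)z$ read in the variable $m$. So the proof is essentially: rewrite both sides as generating series, apply \eqref{eq:add_z}, Lemma~\ref{lem:basic_lemma}, and Corollary~\ref{cor:RegThm_another}, then invoke the $\Gamma_1$ reflection identity.

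The main obstacle I anticipate is the careful matching of indices and endpoints: the operation $\circledast$ adds the two $z_n$-depths, $\varphi$ swaps $x\leftrightarrow z=x+y$ and negates $y$, and there is a genuine asymmetry because $u$ ends in $y$ while $\varphi(u)$ ends in $z$, so the appearance of the trailing $x$ and of $1+zT$ (rather than $1-zT$) in the claimed formula must be tracked with signs. I would handle this by first proving the identity for $u=y$ (a one-line check against the known value of the relevant single zeta generating function, matching $\frac{\sin\pi T}{\pi}Z^{\shuffle}(yx\frac{1}{1+zT})$ with $\sum_m Z(y^m)T^m=\sum_m \zeta^{\shuffle}(\{1\}^m)T^m$ suitably interpreted via $Z$), and then reducing the general case to this via the recursions for $\diamond$/$*$ and the $\mstuffle$-recursions \eqref{eq:front_x}, \eqref{eq:front_y}, \eqref{eq:back_x}, \eqref{eq:back_yy}, so that an induction on the length of $u$ closes the argument.
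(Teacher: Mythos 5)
There is a genuine gap at the heart of your argument. You correctly locate the source of the factor $\frac{\sin(\pi T)}{\pi}$ (namely $\Gamma_{1}(T)\Gamma_{1}(-T)=\frac{\pi T}{\sin\pi T}$ together with the overall $T$ and $Z^{\shuffle}\bigl(\frac{1}{1-yT}\bigr)=1$, via Corollary \ref{cor:RegThm_another}), but the step that actually produces the tail $\frac{1}{1+zT}$ is missing. Writing $\varphi(u)=vz_{k}$, the definition of $\circledast$ gives $K(u;T)=TZ\bigl(\bigl(\frac{1}{1-yT}*v\bigr)z_{k+1}\bigr)$, and the whole point of the lemma is the identity
\[
\frac{1}{1-yT}*\left(vz_{k+1}\frac{1}{1+zT}\right)=\left(\frac{1}{1-yT}*v\right)z_{k+1},
\]
which lets you write $K(u;T)=TZ^{*}\bigl(\frac{1}{1-yT}\bigr)Z^{*}\bigl(vz_{k+1}\frac{1}{1+zT}\bigr)$ and only then regularize. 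Proving this identity requires two specific inputs you do not supply: a factorization of $\frac{1}{1-yT}*w_{1}w_{2}w_{3}$ along a marked middle block (equation (\ref{eq:stuffle_fact}) in the paper, coming from the combinatorial description of $*$), and the fact that $1-yT(1+zT)^{-1}$ is the harmonic inverse of $\frac{1}{1-yT}$ --- that inverse is exactly where $\frac{1}{1+zT}$ comes from. Your proposed substitute, the identity $w_{1}z\mstuffle w_{2}z=-(w_{1}*w_{2})z$ from (\ref{eq:add_z}), cannot play this role: it concerns the symmetric harmonic product $\mstuffle$, which does not appear anywhere in the statement or proof of this lemma (it enters only later, in the proof of Proposition \ref{prop:H_prod}, where this lemma is \emph{applied}), and it says nothing about how $\frac{1}{1-yT}*{}$ interacts with a word carrying the tail $\frac{1}{1+zT}$.

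Two smaller but real problems: the decomposition ``$u=vy$ so $\varphi(u)=\varphi(v)z$'' is wrong on both counts --- membership in $y\mathfrak{h}$ constrains the first letter of $u$, not the last, and $\varphi(y)=-y$ while it is $\varphi(x)=z$; the correct reduction is to $\varphi(u)=vz_{k}$ with $v\in\mathfrak{h}^{1}$. And the closing suggestion to induct on the length of $u$ via the $\diamond$ and $\mstuffle$ recursions is not developed: the two sides of the claimed identity are not shown to satisfy matching recursions, and there is no reason to expect that the base case $u=y$ plus those recursions determines $K(u;T)$. As it stands the proposal assembles the right peripheral tools but omits the one computation that makes the lemma true.
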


\begin{proof}
It is enough to consider the case where $\varphi(u)=vz_{k}$ for some
$v\in\mathfrak{h}^{1}$ and $k\in\mathbb{Z}_{\geq1}$. We have
\begin{align*}
K(u;T) & =TZ\left(\frac{1}{1-yT}y\circledast vz_{k}\right)\\
 & =TZ\left(\left(\frac{1}{1-yT}*v\right)z_{k+1}\right).
\end{align*}
Note that for any formal power series $w_{1}$, $w_{2}$, and $w_{3}$
of $T$ whose coefficients are in $\mathfrak{h}^{1}$, $\bigoplus_{m=1}^{\infty}\mathbb{Q}z_{m}$,
and $\mathfrak{h}$ respectively, we have
\begin{equation}
\frac{1}{1-yT}*w_{1}w_{2}w_{3}=\left(\frac{1}{1-yT}*w_{1}\right)w_{2}(1+xT)\left(\frac{1}{1-yT}*w_{3}\right)\label{eq:stuffle_fact}
\end{equation}
since the combinatorial description of the harmonic product between
indices shows
\[
y^{N}*w_{1}w_{2}w_{3}=\sum_{N_{1}+N_{2}=N}\left(y^{N_{1}}*w_{1}\right)w_{2}\left(y^{N_{2}}*w_{3}\right)+\sum_{N_{1}+1+N_{2}=N}\left(y^{N_{1}}*w_{1}\right)w_{2}x\left(y^{N_{2}}*w_{3}\right).
\]
Note that the harmonic inverse of $\frac{1}{1-yT}$ is given by
\[
1-yT+yzT^{2}-yz^{2}T^{2}+\cdots=1-yT(1+zT)^{-1}.
\]
Thus, by applying (\ref{eq:stuffle_fact}) to $w_{1}=v$, $w_{2}=z_{k+1}(1+xT)^{-1}$,
and $w_{3}=1-yT(1+zT)^{-1}$, we obtain
\[
\frac{1}{1-yT}*vz_{k+1}\frac{1}{1+zT}=\left(\frac{1}{1-yT}*v\right)z_{k+1},
\]
and thus
\begin{align*}
K(u;T) & =TZ\left(\frac{1}{1-yT}*vz_{k+1}\frac{1}{1+zT}\right)\\
 & =TZ^{*}\left(\frac{1}{1-yT}\right)Z^{*}\left(vz_{k+1}\frac{1}{1+zT}\right).
\end{align*}
Here since 
\[
\frac{1}{1-yT}\in\mathfrak{h}^{0}[[T]]\frac{1}{1-yT}\ \ \ \text{and}\ \ \ vz_{k+1}\frac{1}{1+zT}\in\mathfrak{h}^{0}[[T]]\frac{1}{1+yT},
\]
we can apply Corollary \ref{cor:RegThm_another}, and we have
\begin{align*}
K(u;T) & =T\Gamma_{1}(T)^{-1}Z^{\shuffle}\left(\frac{1}{1-yT}\right)\Gamma_{1}(-T)^{-1}Z^{\shuffle}\left(vz_{k+1}\frac{1}{1+zT}\right)\\
 & =\frac{\sin(\pi T)}{\pi}Z^{\shuffle}\left(\varphi(u)x\frac{1}{1+zT}\right).\qedhere
\end{align*}
\end{proof}
Define a linear map $\alpha_{A}:\mathfrak{h}\to\mathfrak{h}[[A]]$
by
\[
\alpha_{A}(1)=1,\ \alpha_{A}(vw)=v\left(\frac{1}{1+xA}\shuffle w\right)\ \ \ (v\in\{x,y\},w\in\mathfrak{h}).
\]

\begin{lem}
\label{lem:alpha_T_stuffle}$\alpha_{A}(w_{1}*w_{2})=\alpha_{A}(w_{1})*\alpha_{A}(w_{2})$
for $w_{1},w_{2}\in\mathfrak{h}^{1}$.
\end{lem}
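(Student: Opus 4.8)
The plan is to exploit the fact that, contrary to first appearances, $\alpha_{A}$ is an algebra homomorphism for \emph{concatenation}, and then to run the defining recursion of $*$. By Lemma \ref{lem:basic_lemma} with $P=-xA\in\mathcal{R}$, for any word $v_{1}\cdots v_{n}$ one has $\frac{1}{1+xA}\shuffle v_{1}\cdots v_{n}=\frac{1}{1+xA}v_{1}\frac{1}{1+xA}v_{2}\cdots v_{n}\frac{1}{1+xA}$, so $\alpha_{A}$ is precisely the $\mathbb{Q}[[A]]$-algebra endomorphism of $\mathfrak{h}[[A]]$ (with respect to concatenation) determined by
\[
\alpha_{A}(x)=x\tfrac{1}{1+xA},\qquad\alpha_{A}(y)=y\tfrac{1}{1+xA}.
\]
In particular $\alpha_{A}(w_{1}w_{2})=\alpha_{A}(w_{1})\alpha_{A}(w_{2})$ for all $w_{1},w_{2}\in\mathfrak{h}$, and, writing $z_{n}=yx^{n-1}$ and using that powers of $x$ commute,
\[
\alpha_{A}(z_{n})=\bigl(y\tfrac{1}{1+xA}\bigr)\bigl(x\tfrac{1}{1+xA}\bigr)^{n-1}=y\,x^{n-1}(1+xA)^{-n}=\sum_{k\geq0}(-A)^{k}\binom{n-1+k}{k}z_{n+k}\in\mathfrak{h}^{1}[[A]].
\]

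The single nonformal ingredient is the following. Under the substitution $z_{n}\mapsto s^{n}$ — which turns the bilinear operation $(z_{a},z_{b})\mapsto z_{a+b}$ into ordinary multiplication of power series in $s$ — the element $\alpha_{A}(z_{n})$ goes to $\bigl(\tfrac{s}{1+As}\bigr)^{n}$. Hence, if $\alpha_{A}(z_{a})=\sum_{i}c_{i}z_{i}$ and $\alpha_{A}(z_{b})=\sum_{j}d_{j}z_{j}$, then
\[
\sum_{i,j}c_{i}d_{j}\,z_{i+j}=\alpha_{A}(z_{a+b}),
\]
because $\bigl(\tfrac{s}{1+As}\bigr)^{a}\bigl(\tfrac{s}{1+As}\bigr)^{b}=\bigl(\tfrac{s}{1+As}\bigr)^{a+b}$. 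This is exactly the coefficient identity needed to match the "merging" terms of the harmonic product.

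Finally I would prove $\alpha_{A}(w_{1}*w_{2})=\alpha_{A}(w_{1})*\alpha_{A}(w_{2})$; by bilinearity we may take $w_{1},w_{2}$ to be monomials in the $z_{n}$, and we induct on the total number of letters $z_{n}$ in $w_{1}$ and $w_{2}$, using the recursion $z_{a}u*z_{b}v=z_{a}(u*z_{b}v)+z_{b}(z_{a}u*v)+z_{a+b}(u*v)$ for $u,v\in\mathfrak{h}^{1}$. The case $w_{1}=1$ or $w_{2}=1$ is trivial. For $w_{1}=z_{a}u$, $w_{2}=z_{b}v$, applying $\alpha_{A}$ to the recursion, using concatenation-multiplicativity and the induction hypothesis on the shorter products $u*z_{b}v$, $z_{a}u*v$, $u*v$, rewrites $\alpha_{A}(w_{1}*w_{2})$ as
\[
\alpha_{A}(z_{a})\bigl(\alpha_{A}(u)*\alpha_{A}(z_{b})\alpha_{A}(v)\bigr)+\alpha_{A}(z_{b})\bigl(\alpha_{A}(z_{a})\alpha_{A}(u)*\alpha_{A}(v)\bigr)+\alpha_{A}(z_{a+b})\bigl(\alpha_{A}(u)*\alpha_{A}(v)\bigr);
\]
expanding $\alpha_{A}(w_{1})*\alpha_{A}(w_{2})=\sum_{i,j}c_{i}d_{j}\,(z_{i}\alpha_{A}(u))*(z_{j}\alpha_{A}(v))$ by the same recursion and collecting terms produces the identical first two summands, while the third becomes $\bigl(\sum_{i,j}c_{i}d_{j}z_{i+j}\bigr)\bigl(\alpha_{A}(u)*\alpha_{A}(v)\bigr)=\alpha_{A}(z_{a+b})\bigl(\alpha_{A}(u)*\alpha_{A}(v)\bigr)$ by the previous paragraph. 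So the two sides agree. The main obstacle — more a point to spot than a genuine difficulty — is recognizing the concatenation-multiplicativity of $\alpha_{A}$ and packaging the required coefficient identity as the clean generating-function statement $(s/(1+As))^{a}(s/(1+As))^{b}=(s/(1+As))^{a+b}$; once these are in place the rest is routine bookkeeping with the $*$-recursion.
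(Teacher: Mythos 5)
Your proof is correct, but it takes a genuinely different route from the paper's. The paper realizes $\mathfrak{h}^{1}$ inside the ring of quasi-symmetric functions via the injective $*$-homomorphism $\psi(yx^{k_{1}-1}\cdots yx^{k_{d}-1})=\sum_{0<m_{1}<\cdots<m_{d}}t_{m_{1}}^{-k_{1}}\cdots t_{m_{d}}^{-k_{d}}$ and verifies that $\alpha_{A}$ corresponds under $\psi$ to the substitution automorphism $t_{m}^{-1}\mapsto(t_{m}+A)^{-1}$; multiplicativity is then automatic, since a ring automorphism of the target trivially commutes with products. You instead stay inside $\mathfrak{h}^{1}$: you observe (correctly, via Lemma \ref{lem:basic_lemma}) that $\alpha_{A}$ is the concatenation endomorphism sending each letter $v$ to $v\tfrac{1}{1+xA}$, you package the only nontrivial coefficient identity --- the one governing the merge terms $z_{a+b}(u*v)$ --- as the generating-function statement $(s/(1+As))^{a}(s/(1+As))^{b}=(s/(1+As))^{a+b}$ under $z_{n}\mapsto s^{n}$, and you induct on the number of $z$-letters using the recursion $z_{a}u*z_{b}v=z_{a}(u*z_{b}v)+z_{b}(z_{a}u*v)+z_{a+b}(u*v)$. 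All steps check out: the induction hypothesis applies to the three shorter products, the first two summands match by bilinearity, and the third matches by your coefficient identity. What your approach buys is an elementary, self-contained argument that avoids the quasi-symmetric function machinery of \cite{HofQSym}, at the cost of explicitly invoking the recursive description of $*$, which the paper only references through \cite{AlgDiff}; what the paper's approach buys is brevity and a conceptual explanation of the lemma ($\alpha_{A}$ is the shift $t_{m}\mapsto t_{m}+A$ on harmonic sums). Both proofs are valid.
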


\begin{proof}
We prove this lemma based on the theory of quasi-symmetric functions
in \cite[Theorem 2.2]{HofQSym}. Let $\mathfrak{P}$ be the inverse
limit of $\mathbb{Z}[t_{1}^{-1},\dots,t_{n}^{-1}]$ and define a linear
map $\psi:\mathfrak{h}^{1}\to\mathfrak{P}$ by $\psi(yx^{k_{1}-1}\cdots yx^{k_{d}-1})=\sum_{0<m_{1}<\cdots<m_{d}}t_{m_{1}}^{-k_{1}}\cdots t_{m_{d}}^{-k_{d}}$.
Then $\psi$ is injective and $\psi(w_{1}*w_{2})=\psi(w_{1})\psi(w_{2})$
for all $w_{1},w_{2}\in\mathfrak{h}^{1}$ (see \cite[Theorem 2.2]{HofQSym}).
Define the continuous ring automorphism $D_{A}$ of $\mathfrak{P}[[A]]$
by $D_{A}(A)=A$ and $D_{A}(t_{m}^{-1})=(t_{m}+A)^{-1}=\sum_{e=0}^{\infty}t_{m}^{-1-e}(-A)^{e}$.
Then for any $u=yx^{k_{1}-1}\cdots yx^{k_{d}-1}\in\mathfrak{h}^{1}$,
we have $\psi(\alpha_{A}(u))=D_{A}(\psi(u))$ since
\begin{align*}
\psi(\alpha_{A}(u)) & =\sum_{e_{1},\dots,e_{d}\geq0}\psi(yx^{k_{1}+e_{1}-1}\cdots yx^{k_{d}+e_{d}-1})\prod_{j=1}^{d}{k_{j}+e_{j}-1 \choose e_{j}}(-A)^{e_{j}}\\
 & =\sum_{0<m_{1}<\cdots<m_{d}}\prod_{j=1}^{d}\left(\sum_{e_{j}=0}^{\infty}t_{m_{j}}^{-k_{j}-e_{j}}{k_{j}+e_{j}-1 \choose e_{j}}(-A)^{e_{j}}\right)\\
 & =\sum_{0<m_{1}<\cdots<m_{d}}\prod_{j=1}^{d}(t_{m_{j}}+A)^{-k_{j}}=D_{A}(\psi(u)).
\end{align*}
Thus for any $w_{1},w_{2}\in\mathfrak{h}^{1}$, $\psi(\alpha_{A}(w_{1}*w_{2}))=\psi(\alpha_{A}(w_{1})*\alpha_{A}(w_{2}))$.
Since $\psi$ is injective, the lemma is proved.
\end{proof}
Now, we can show Proposition \ref{prop:H_prod}.

\begin{proof}[Proof of Proposition \ref{prop:H_prod}]
For any $w=w'z\in y\mathfrak{h}z$, we have
\begin{align*}
H(w) & =Z^{\shuffle}\circ\varphi\left(\frac{1}{1-xA}w'z\frac{1}{1-xB}\right)\\
 & =Z^{\shuffle}\circ\varphi\left(\frac{1}{1-xA}\shuffle\alpha_{A}(w')z\frac{1}{1+x(A-B)}\right)\ \ \ (\text{by Lemma \ref{lem:basic_lemma}})\\
 & =Z^{\shuffle}\circ\varphi\left(\alpha_{A}(w')z\frac{1}{1+x(A-B)}\right)\ \ \ (\text{since \ensuremath{\varphi} is a \ensuremath{\shuffle}-homomorphism})\\
 & =Z^{\shuffle}\left(\varphi(\alpha_{A}(w'))x\frac{1}{1+z(A-B)}\right)\\
 & =\frac{\pi}{\sin(\pi(A-B))}K(\alpha_{A}(w');A-B)\ \ \ (\text{by Lemma \ref{lem:another_expr_KawashimaFunc}}).
\end{align*}
Thus for $u=u'z,v=v'z\in y\mathfrak{h}z$, we have
\begin{align*}
H(u\mstuffle v) & =-H((u'*v')z)\ \ \ (\text{by (\ref{eq:add_z})})\\
 & =-\frac{\pi}{\sin(\pi(A-B))}K(\alpha_{A}(u'*v');A-B)\\
 & =-\frac{\pi}{\sin(\pi(A-B))}K(\alpha_{A}(u')*\alpha_{A}(v');A-B)\ \ \ (\text{by Lemma \ref{lem:alpha_T_stuffle}})\\
 & =-\frac{\pi}{\sin(\pi(A-B))}K(\alpha_{A}(u');A-B)K(\alpha_{A}(v');A-B)\ \ \ (\text{by Theorem \ref{thm:Kawashima}})\\
 & =-\frac{\sin(\pi(A-B))}{\pi}H(u)H(v).\qedhere
\end{align*}
\end{proof}

\section{Proof of Proposition \ref{prop:H_special}}

The purpose of this section is to prove Proposition \ref{prop:H_special}.
The left-hand side of Proposition \ref{prop:H_special} is equal to
\[
-H\left(\frac{yT}{1+yT}z\right)=Z^{\shuffle}\left(\frac{1}{1-zA}\frac{yT}{1-yT}x\frac{1}{1-zB}\right).
\]
Here, by Lemma \ref{lem:basic_lemma}, we have
\begin{align*}
 & \frac{1}{1-zA}y\frac{1}{1-yT}x\frac{1}{1-zB}\\
 & =\frac{1}{1-xA}\shuffle\frac{1}{1-yB}\shuffle\frac{1}{1-yA+yB}y\frac{1}{1+xA+yB-yT}x\frac{1}{1+xA-xB}.
\end{align*}
Thus
\begin{align*}
 & -H\left(\frac{yT}{1+yT}z\right)\\
 & =TZ\left(\frac{1}{1-yA+yB}y\frac{1}{1+xA+yB-yT}x\frac{1}{1+xA-xB}\right)\\
 & =(-\alpha+\beta-\gamma)Z\left(\frac{1}{1-y\alpha}y\frac{1}{1-x\gamma-y\beta}x\frac{1}{1+x\alpha}\right)\ \ \ (\alpha=A-B,\beta=T-B,\gamma=-A).
\end{align*}
Thus it suffices to prove the following identity.
\begin{prop}
We have
\begin{align*}
 & (-\alpha+\beta-\gamma)Z\left(\frac{1}{1-y\alpha}y\frac{1}{1-x\gamma-y\beta}x\frac{1}{1+x\alpha}\right)\\
 & =\frac{\pi}{\sin\pi\alpha}\left(\frac{\Gamma(1-\gamma)\Gamma(1-\beta)}{\Gamma(1-\alpha-\gamma)\Gamma(1+\alpha-\beta)}-1\right).
\end{align*}
\end{prop}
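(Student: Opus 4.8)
The plan is to compute the left‑hand side by writing $Z$ of the generating series as an iterated integral and then evaluating the resulting two‑dimensional integral through Euler‑type integral representations of hypergeometric functions. Every word occurring in $\frac{1}{1-y\alpha}y\frac{1}{1-x\gamma-y\beta}x\frac{1}{1+x\alpha}$ starts with $y$ and ends with $x$, hence is admissible, so $Z$ applied to it is literally a power series in $\alpha,\beta,\gamma$ with real coefficients. Using the iterated integral representation $Z(u_{1}\cdots u_{m})=\int_{0<t_{1}<\cdots<t_{m}<1}\prod_{i}\omega_{u_{i}}(t_{i})$ with $\omega_{x}(t)=dt/t$ and $\omega_{y}(t)=dt/(1-t)$, one integrates out the three ``resolvent'' blocks: writing $u$ for the coordinate attached to the explicit $y$ and $v$ for the one attached to the explicit $x$, the block $\frac{1}{1-y\alpha}$ integrates to $(1-u)^{-\alpha}$, the block $\frac{1}{1+x\alpha}$ integrates to $v^{\alpha}$, and the block $\frac{1}{1-x\gamma-y\beta}$ integrates to $(v/u)^{\gamma}\big((1-u)/(1-v)\big)^{\beta}$. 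Only $u$ (with form $du/(1-u)$) and $v$ (with form $dv/v$) survive, and collecting exponents gives
\[
Z\!\left(\frac{1}{1-y\alpha}y\frac{1}{1-x\gamma-y\beta}x\frac{1}{1+x\alpha}\right)=\int_{0<u<v<1}u^{-\gamma}(1-u)^{\beta-\alpha-1}\,v^{\alpha+\gamma-1}(1-v)^{-\beta}\,du\,dv.
\]

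Next I would evaluate this integral. The substitution $u=vw$ decouples it into $\int_{0}^{1}\!\!\int_{0}^{1}w^{-\gamma}v^{\alpha}(1-v)^{-\beta}(1-vw)^{\beta-\alpha-1}\,dw\,dv$; performing the $w$‑integral by $\int_{0}^{1}w^{b-1}(1-w)^{c-b-1}(1-zw)^{-a}\,dw=\frac{\Gamma(b)\Gamma(c-b)}{\Gamma(c)}\,{}_{2}F_{1}(a,b;c;z)$ turns it into $\frac{1}{1-\gamma}\int_{0}^{1}v^{\alpha}(1-v)^{-\beta}\,{}_{2}F_{1}(1+\alpha-\beta,1-\gamma;2-\gamma;v)\,dv$, and then the $v$‑integral by $\int_{0}^{1}v^{s-1}(1-v)^{t-1}\,{}_{2}F_{1}(a,b;c;v)\,dv=\frac{\Gamma(s)\Gamma(t)}{\Gamma(s+t)}\,{}_{3}F_{2}(a,b,s;c,s+t;1)$ yields
\[
Z\!\left(\frac{1}{1-y\alpha}y\frac{1}{1-x\gamma-y\beta}x\frac{1}{1+x\alpha}\right)=\frac{1}{1-\gamma}\cdot\frac{\Gamma(1+\alpha)\Gamma(1-\beta)}{\Gamma(2+\alpha-\beta)}\,{}_{3}F_{2}\!\big(1+\alpha,\,1+\alpha-\beta,\,1-\gamma;\,2+\alpha-\beta,\,2-\gamma;\,1\big).
\]
This ${}_{3}F_{2}$ has the degenerate shape ${}_{3}F_{2}(c,A,B;A+1,B+1;1)$ with $c=1+\alpha$, $A=1+\alpha-\beta$, $B=1-\gamma$; since $(A)_{n}/(A+1)_{n}=A/(A+n)$ and likewise for $B$, it equals $AB\sum_{n\ge0}\frac{(c)_{n}}{n!\,(A+n)(B+n)}$, and a partial fraction decomposition in $n$ together with Gauss's summation $\sum_{n\ge0}\frac{(c)_{n}}{n!\,(A+n)}=\frac{1}{A}\,{}_{2}F_{1}(c,A;A+1;1)=\frac{\Gamma(A)\Gamma(1-c)}{\Gamma(A+1-c)}$ gives
\[
{}_{3}F_{2}(c,A,B;A+1,B+1;1)=\frac{AB\,\Gamma(1-c)}{B-A}\left(\frac{\Gamma(A)}{\Gamma(A+1-c)}-\frac{\Gamma(B)}{\Gamma(B+1-c)}\right).
\]

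The crux of the argument is that $B-A=(1-\gamma)-(1+\alpha-\beta)=-\alpha+\beta-\gamma$ is exactly the prefactor in the statement, so multiplying through by $-\alpha+\beta-\gamma$ clears this denominator; since moreover $1-c=-\alpha$, $A+1-c=1-\beta$, and $B+1-c=1-\alpha-\gamma$, the surviving $\Gamma$‑factors collapse — cancelling $\Gamma(1+\alpha-\beta)$ and the factor $1-\gamma$, and using $\Gamma(1+\alpha)\Gamma(-\alpha)=-\Gamma(\alpha)\Gamma(1-\alpha)=-\pi/\sin\pi\alpha$ — precisely to $\frac{\pi}{\sin\pi\alpha}\big(\frac{\Gamma(1-\gamma)\Gamma(1-\beta)}{\Gamma(1-\alpha-\gamma)\Gamma(1+\alpha-\beta)}-1\big)$. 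The one genuinely delicate point is not this algebra but the bookkeeping of convergence: the double integral, the two Euler integrals, and the hypergeometric series are not all absolutely convergent at $\alpha=\beta=\gamma=0$ simultaneously, so I would run the whole computation for real parameters in a small region (for instance $-1<\alpha<0$ with $\beta,\gamma$ near $0$), where each step is legitimate, and then pass to the identity of power series by analytic continuation: the left‑hand side is a power series in $\alpha,\beta,\gamma$ with positive radius of convergence — all MZVs appearing are admissible and the sum of all MZVs of a given weight is $O(\text{weight})$ by the sum formula, so its coefficients grow at most polynomially — while the right‑hand side is manifestly holomorphic near the origin.
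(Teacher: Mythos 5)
Your proposal is correct and follows essentially the same route as the paper: the iterated integral representation gives the same double integral $\int_{0<u<v<1}u^{-\gamma}(1-u)^{\beta-\alpha-1}v^{\alpha+\gamma-1}(1-v)^{-\beta}\,du\,dv$, which both arguments reduce via a partial-fraction split to two Gauss ${}_2F_1(\cdot,\cdot;\cdot;1)$ evaluations, with the prefactor $-\alpha+\beta-\gamma$ cancelling the partial-fraction denominator, followed by analytic continuation from a region such as $\Re(\alpha)<0$ where the sums converge. Your detour through the ${}_3F_2(c,A,B;A+1,B+1;1)$ merely repackages the series manipulation the paper performs directly, so the two proofs are the same in substance.
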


\begin{proof}
Note that the equality which we want to show is the one of formal
power series in $\alpha$, $\beta$, $\gamma$. By the iterated integral
expression of MZVs, we have
\begin{align*}
 & Z\left(\frac{1}{1-y\alpha}y\frac{1}{1-x\gamma-y\beta}x\frac{1}{1+x\alpha}\right)\\
 & =\sum_{k,l,m=0}^{\infty}Z\left(\left(\alpha y\right)^{k}y\left(\gamma x+\beta y\right)^{l}x(-\alpha x)^{m}\right)\\
 & =\sum_{k,l,m=0}^{\infty}\int_{0<t_{a}<t_{b}<1}\left(\frac{1}{k!}\left(\alpha\int_{0}^{t_{a}}\frac{ds}{1-s}\right)^{k}\right)\left(\frac{1}{l!}\left(\gamma\int_{t_{a}}^{t_{b}}\frac{ds}{s}+\beta\int_{t_{a}}^{t_{b}}\frac{ds}{1-s}\right)^{l}\right)\left(\frac{1}{m!}\left(-\alpha\int_{t_{b}}^{1}\frac{ds}{s}\right)^{m}\right)\frac{dt_{a}dt_{b}}{(1-t_{a})t_{b}}\\
 & =\int_{0<t_{a}<t_{b}<1}\left(\frac{1}{1-t_{a}}\right)^{\alpha}\left(\frac{t_{b}}{t_{a}}\right)^{\gamma}\left(\frac{1-t_{a}}{1-t_{b}}\right)^{\beta}\left(\frac{1}{t_{b}}\right)^{-\alpha}\frac{dt_{a}dt_{b}}{(1-t_{a})t_{b}}.
\end{align*}
Note that
\[
\int_{0<t_{a}<t_{b}<1}\left(\frac{1}{1-t_{a}}\right)^{\alpha}\left(\frac{t_{b}}{t_{a}}\right)^{\gamma}\left(\frac{1-t_{a}}{1-t_{b}}\right)^{\beta}\left(\frac{1}{t_{b}}\right)^{-\alpha}\frac{dt_{a}dt_{b}}{(1-t_{a})t_{b}}
\]
converges absolutely as a complex function of $\alpha$, $\beta$,
$\gamma$ in a neighborhood of the origin $(\alpha,\beta,\gamma)=(0,0,0)$.
Thus it is enough to show the equality
\begin{align*}
 & (-\alpha+\beta-\gamma)\int_{0<t_{a}<t_{b}<1}\left(\frac{1}{1-t_{a}}\right)^{\alpha}\left(\frac{t_{b}}{t_{a}}\right)^{\gamma}\left(\frac{1-t_{a}}{1-t_{b}}\right)^{\beta}\left(\frac{1}{t_{b}}\right)^{-\alpha}\frac{dt_{a}dt_{b}}{(1-t_{a})t_{b}}\\
 & =\frac{\pi}{\sin\pi\alpha}\left(\frac{\Gamma(1-\gamma)\Gamma(1-\beta)}{\Gamma(1-\alpha-\gamma)\Gamma(1+\alpha-\beta)}-1\right)
\end{align*}
as a complex function of $\alpha,\beta,\gamma$ in a neighborhood
of the origin. We denote by $(x)_{m}$ the Pochhammer symbol $x(x+1)\cdots(x+m-1)$.
Then, as a complex function, we have
\begin{align*}
 & \int_{0<t_{a}<t_{b}<1}\left(\frac{1}{1-t_{a}}\right)^{\alpha}\left(\frac{t_{b}}{t_{a}}\right)^{\gamma}\left(\frac{1-t_{a}}{1-t_{b}}\right)^{\beta}\left(\frac{1}{t_{b}}\right)^{-\alpha}\frac{dt_{a}dt_{b}}{(1-t_{a})t_{b}}\\
 & =\int_{0<t_{b}<1}t_{b}^{\alpha+\gamma-1}(1-t_{b})^{-\beta}\left(\int_{0<t_{a}<t_{b}}t_{a}^{-\gamma}(1-t_{a})^{-\alpha+\beta-1}dt_{a}\right)dt_{b}\\
 & =\int_{0<t_{b}<1}t_{b}^{\alpha+\gamma-1}(1-t_{b})^{-\beta}\left(\sum_{n=0}^{\infty}\int_{0<t_{a}<t_{b}}t_{a}^{n-\gamma}\frac{(\alpha-\beta+1)_{n}}{n!}dt_{a}\right)dt_{b}\\
 & =\int_{0<t_{b}<1}t_{b}^{\alpha+\gamma-1}(1-t_{b})^{-\beta}\left(\sum_{n=0}^{\infty}\frac{(\alpha-\beta+1)_{n}}{n!}\frac{t_{b}^{n-\gamma+1}}{n-\gamma+1}\right)dt_{b}\\
 & =\sum_{n=0}^{\infty}\frac{(\alpha-\beta+1)_{n}}{n!}\frac{1}{n-\gamma+1}\int_{0<t_{b}<1}t_{b}^{\alpha+n}(1-t_{b})^{-\beta}dt_{b}\\
 & =\sum_{n=0}^{\infty}\frac{(\alpha-\beta+1)_{n}}{n!}\frac{1}{n-\gamma+1}\frac{\Gamma(\alpha+n+1)\Gamma(-\beta+1)}{\Gamma(\alpha-\beta+n+2)}\\
 & =\frac{\Gamma(\alpha+1)\Gamma(-\beta+1)}{\Gamma(\alpha-\beta+1)}\sum_{n=0}^{\infty}\frac{(\alpha+1)_{n}}{n!(n-\gamma+1)(\alpha-\beta+n+1)}\\
 & =\frac{\Gamma(\alpha+1)\Gamma(-\beta+1)}{\Gamma(\alpha-\beta+1)(-\alpha+\beta-\gamma)}\sum_{n=0}^{\infty}\left(\frac{(\alpha+1)_{n}(\alpha-\beta+1)_{n}}{(\alpha-\beta+2)_{n}n!(\alpha-\beta+1)}-\frac{(\alpha+1)_{n}(-\gamma+1)_{n}}{(-\gamma+2)_{n}n!(-\gamma+1)}\right).
\end{align*}
Here, if $\Re(\alpha)<0$, then the infinite sums 
\[
\sum_{n=0}^{\infty}\frac{(\alpha+1)_{n}(\alpha-\beta+1)_{n}}{(\alpha-\beta+2)_{n}n!}\ \ \text{ and }\ \ \sum_{n=0}^{\infty}\frac{(\alpha+1)_{n}(-\gamma+1)_{n}}{(-\gamma+2)_{n}n!}
\]
converge to the hypergeometric series
\[
{_{2}F_{1}}\left[\begin{matrix}\alpha+1,\alpha-\beta+1\\
\alpha-\beta+2
\end{matrix};1\right]\ \ \text{ and }\ \ {_{2}F_{1}}\left[\begin{matrix}\alpha+1,-\gamma+1\\
-\gamma+2
\end{matrix};1\right]
\]
which are respectively equal to 
\[
\frac{\Gamma(\alpha-\beta+2)\Gamma(-\alpha)}{\Gamma(-\beta+1)}\ \ \text{ and }\ \ \frac{\Gamma(-\gamma+2)\Gamma(-\alpha)}{\Gamma(-\alpha-\gamma+1)}
\]
by Gauss's summation theorem. Thus
\begin{align*}
 & (-\alpha+\beta-\gamma)\int_{0<t_{a}<t_{b}<1}\left(\frac{1}{1-t_{a}}\right)^{\alpha}\left(\frac{t_{b}}{t_{a}}\right)^{\gamma}\left(\frac{1-t_{a}}{1-t_{b}}\right)^{\beta}\left(\frac{1}{t_{b}}\right)^{-\alpha}\frac{dt_{a}dt_{b}}{(1-t_{a})t_{b}}\\
 & =\frac{\Gamma(\alpha+1)\Gamma(-\beta+1)}{\Gamma(\alpha-\beta+1)}\left(\frac{\Gamma(\alpha-\beta+2)\Gamma(-\alpha)}{\Gamma(-\beta+1)(\alpha-\beta+1)}-\frac{\Gamma(-\gamma+2)\Gamma(-\alpha)}{\Gamma(-\alpha-\gamma+1)(-\gamma+1)}\right)\\
 & =\frac{\pi}{\sin(\pi\alpha)}\left(\frac{\Gamma(1-\gamma)\Gamma(1-\beta)}{\Gamma(1-\alpha-\gamma)\Gamma(1+\alpha-\beta)}-1\right)
\end{align*}
if $\Re(\alpha)<0$. By the identity theorem of complex functions,
this equality also holds without the condition $\Re(\alpha)<0$. Thus
the proposition is proved.
\end{proof}
Now all propositions in Section \ref{sec:Overview} are proved, and
hence Theorems \ref{thm:MainThm} and \ref{thm:MainThm_equiv} are
unconditionally proved.

\section*{Acknowledgements}

This work was supported by JSPS KAKENHI Grant Numbers JP18K03243 and
JP18K13392.

\end{document}